\newtheorem{theorem}{Theorem}
\newtheorem{corollary}[theorem]{Corollary}
\newtheorem{definition}[theorem]{Definition}
\newtheorem{lemma}[theorem]{Lemma}
\newtheorem{nonGlobalClaim}{Claim}[theorem]  %for claims that are part of a larger theorem, but which don't hold absolutely
\newtheorem{observation}[theorem]{Observation}
\newtheorem{question}[theorem]{Question}
\newtheorem{convention}[theorem]{Convention}
\newtheorem{remark}[theorem]{Remark}
\newcommand{\lav}[1]{\Diamond_{\text{Lav}}( #1)}
\newcommand{\lavminus}[1] {
  \Diamond^-_{\text{Lav}}( #1)}
\newcommand{\lavplus}[1] {
  \Diamond^+_{\text{Lav}}( #1)}
\begin{document}
\title[Prevalence of Generic Laver Diamond]{Prevalence of Generic Laver Diamond}

\author{Sean D. Cox}
\email{scox9@vcu.edu}
\address{
Department of Mathematics and Applied Mathematics \\
Virginia Commonwealth University \\
1015 Floyd Avenue \\
Richmond, Virginia 23284, USA 
}

\thanks{In memory of Richard Laver, 1942-2012}

\thanks{Part of this work was done while the author participated in the Thematic Program on Forcing and its Applications at the Fields Institute, which was partially supported from NSF grant DMS-1162052.}

\subjclass[2010]{
03E57
03E55
03E35
03E05
}

\begin{abstract}
Viale~\cite{Viale_GuessingModel} introduced the notion of Generic Laver Diamond at $\kappa$---which we denote $\Diamond_{\text{Lav}}(\kappa)$---asserting the existence of a single function from $\kappa \to H_\kappa$ that behaves much like a supercompact Laver function, except with generic elementary embeddings rather than internal embeddings.  Viale proved that the Proper Forcing Axiom (PFA) implies $\lav{\omega_2}$.  We strengthen his theorem by weakening the hypothesis to a statement strictly weaker than PFA.  We also show that the principle $\Diamond_{\text{Lav}}(\kappa)$ provides a uniform, simple construction of 2-cardinal diamonds, and prove that $\lav{\kappa}$ is quite prevalent in models of set theory; in particular:
\begin{enumerate}
 \item  $L$ satisfies $\lavplus{\kappa}$ whenever $\kappa$ is a successor cardinal, or when the appropriate version of Chang's Conjecture fails.
  \item For any successor cardinal $\kappa$, there is a $\kappa$-directed closed class forcing---namely, the forcing from Friedman-Holy~\cite{MR2860182}---that forces $\lav{\kappa}$.
\end{enumerate} 
\end{abstract}

\maketitle

%\section{Introduction}

Prediction principles have been central topics in set theory ever since Jensen introduced the $\Diamond$ principle in the 1960s.  Not only does $\Diamond$ hold in canonical inner models such as $L$, it also is frequently introduced by forcing; for example, adding a Cohen subset of $\kappa$ introduces a $\Diamond_\kappa$ sequence.  The $\Diamond$ principle is frequently used to prove one direction of an independence result.\footnote{e.g. independence of  Suslin's Hypothesis, Whitehead's Problem, existence of non-inner automorphisms of the Calkin algebra, Naimark's Problem, and many others.}   Two-cardinal variations of $\Diamond$---i.e. versions of $\Diamond$ which guess subsets of some fixed $\wp_\kappa(\lambda)$, rather than just guessing subsets of $\kappa$---were introduced by Jech~\cite{MR0325397}.  Donder-Matet~\cite{MR1239715} (with a correction by  Shioya~\cite{MR1738689}) proved that mild cardinal arithmetic assumptions are enough to guarantee such 2-cardinal versions of $\Diamond$; for example, $\kappa^{<\kappa} = \kappa$ implies that $\Diamond(\kappa, \lambda)$ holds for all $\lambda \ge \kappa^+$.  

Laver~\cite{MR0472529} proved that if $\kappa$ is a supercompact cardinal, then there is a function $F: \kappa \to H_\kappa$ which essentially behaves like a universal $\Diamond_\kappa$ sequence with respect to supercompactness measures (rather than merely with respect to the club filter, as is the case with usual $\Diamond_\kappa$ and its 2-cardinal variants).  For this reason it is commonly called a \emph{Supercompact Laver Diamond/Function}, and notably appears in the consistency proofs of the Proper Forcing Axiom, Martin's Maximum, and an indestructibly supercompact cardinal.   

The concept of a Laver function has been generalized in two distinct directions; unfortunately both are referred to as ``Generalized Laver Diamond" in the literature:
\begin{enumerate}
 \item Generalizations of (supercompact) Laver Diamond to some other large cardinal notions (e.g.\ strong cardinals) have been developed (e.g. \cite{MR987765}, \cite{MR1779746}, \cite{Hamkins_LaverDiamond}, and \cite{MR2026390}).  
 \item Viale~\cite{Viale_GuessingModel} generalized the notion of Laver Diamond so that it makes sense at \emph{successor} cardinals. 
\end{enumerate}

 To distinguish Viale's version from the large cardinal versions mentioned above, we will refer to Viale's version as ``Generic" Laver Diamond at $\kappa$, denoted by $\lav{\kappa}$.  This is a function from $\kappa \to H_\kappa$ which behaves somewhat like a Laver function, except with generic rather than internal ultrapowers (it can also be defined without reference to forcing; see Section \ref{sec_LaverFunctions}).  In particular, $\lav{\kappa}$ can hold when $\kappa$ is a \emph{successor} cardinal, and $\lav{\kappa}$ provides a particularly elegant, uniform way to produce two-cardinal diamond sequences; see Section \ref{sec_TwoCardinalDiamond} for such a construction.  Viale's main result was that the Proper Forcing Axiom (PFA) implies $\lav{\omega_2}$.  We strengthen this theorem by weakening the hypothesis to a statement strictly weaker than PFA; moreover our proof is completely elementary and does not make use of the coding of $H_{\omega_2}$ which appeared in Viale's proof.\footnote{The hypothesis that $\mathfrak{c} = \omega_2$ and $\text{GIC}_{\omega_1}$ is a stationary class is a consequence of PFA (by Viale-Wei\ss~\cite{VW_ISP} and Todorcevic~\cite{MR980949}), but is strictly weaker than PFA; see Remark \ref{rem_ISP_Weaker}.  In particular it does not imply the existence of the Caicedo-Velickovic coding.}  
\begin{theorem}\label{thm_StrengthenViale}
If $\mathfrak{c} = \omega_2$ and the class $\text{GIC}_{\omega_1}$ of $\omega_1$-guessing, internally club sets is stationary, then $\lav{\omega_2}$ holds.

More generally:  whenever $\Gamma \subset \wp_{\kappa}(V)$ is a $\Pi_1(V)$ definable stationary class in some parameter from $H_{\kappa^+}$ such that:
\begin{itemize}
 \item  $\Gamma$ projects downward; and
 \item  $\Gamma$ satisfies the Isomorphism Property
\end{itemize}
then there is a $\lav{\Gamma}$ function. 
\end{theorem}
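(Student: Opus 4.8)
The plan is to run Laver's classical diagonalization for supercompact Laver functions, but with the reflection supplied by a supercompact embedding replaced by the joint use of stationarity of $\Gamma$, the Isomorphism Property, and downward projection, while the $\Pi_1(V)$-definability of $\Gamma$ (in a parameter $p \in H_{\kappa^+}$) is what lets ``membership in $\Gamma$'' survive passage to a transitive collapse. For $M \in \Gamma$ write $\kappa_M := M \cap \kappa$ and let $\pi_M : M \to \bar M$ be the transitive collapse. Recall that a $\lav{\Gamma}$ function is an $\ell : \kappa \to H_\kappa$ such that for every relevant target $A$, stationarily many $M \in \Gamma$ guess $A$ through $\ell$ --- i.e.\ the collapse of $(M, \in, \ell, A)$ sees $\ell$ predicting (the appropriate trace of) $A$ at $\kappa_M$; this is the generic-embedding analogue of the supercompact requirement $j(\ell)(\kappa) = A$. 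Assume toward a contradiction that no such $\ell$ exists.

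First I would build $\ell$ by recursion on $\gamma < \kappa$. Given $\ell \restriction \gamma$, I want to set $\ell(\gamma)$ to be (a code for) a \emph{canonical counterexample} to ``$\ell \restriction \gamma$ already guesses everything relative to the collapse structure living at level $\gamma$'', together with an auxiliary witnessing function, recorded so that it too is read off $\ell$. For this to be well-posed one must know that ``the collapse structure at level $\gamma$'' makes sense: this is exactly the role of the Isomorphism Property, which guarantees that all $M \in \Gamma$ with $\kappa_M = \gamma$ have isomorphic collapses (in the relevant parameters), so the choice of $\ell(\gamma)$ does not depend on the representative $M$. The more serious point is that a genuine counterexample to a candidate on all of $\kappa$ a priori lives in $H_{\kappa^+}$, not in $H_\kappa$; here is where stationarity of $\Gamma$ together with downward projection are used --- they let one locate the needed counterexample at level $\gamma$ inside $\bar M$ for some (equivalently every) $M \in \Gamma$ with $\kappa_M = \gamma$, hence as an element of $H_\kappa$, and one should also arrange the values $\ell(\beta)$, $\beta<\gamma$, to be small enough that the collapse maps fix them.

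Next comes the verification that the resulting $\ell$ is a $\lav{\Gamma}$ function. If not, fix --- as in Laver's proof --- a counterexample $A$ of minimal rank, together with the witnessing function $F$ so that no $M \in \Gamma$ closed under $F$ guesses $A$ through $\ell$. Using stationarity of $\Gamma$, pick $M \in \Gamma$ with $A, \ell, F, p \in M$, elementary in a sufficiently large $H_\theta$, and closed under the relevant Skolem and bookkeeping functions. Apply $\pi_M$. Since $\Gamma$ is $\Pi_1(V)$-definable in $p$ and $\bar M$ is transitive with $\pi_M(p) \in \bar M$, ``$\bar N \in \Gamma$'' is recognized correctly inside $\bar M$ for the $\bar N$ that matter, so the recursion defining $\ell$ is absolute between $V$ and $\bar M$; thus $\pi_M(\ell) = \ell \restriction \kappa_M$ and $\ell(\kappa_M)$ was, by construction, the canonical counterexample to $\ell \restriction \kappa_M$ relative to the collapse structure at $\kappa_M$ --- which is precisely $\bar M$. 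But the trace of $A$ on $\bar M$ is a candidate for that counterexample, and minimality of $\mathrm{rank}(A)$ together with the transfer properties of $\Gamma$ force it to be \emph{the} one chosen; that is, $M$ does guess $A$ through $\ell$, contradicting the choice of $A$ and $M$.

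I expect the main obstacle to be the middle step: establishing the reflection equivalence between ``global'' failure (which ``no $\lav{\Gamma}$ function exists'' hands us, phrased over all of $V$) and ``local'' failure inside the collapse structures $\bar M$, so that a counterexample at level $\gamma$ can always be found in $H_\kappa$ and the recursion is actually well-defined. Making this precise is where one has to combine all four hypotheses at once --- downward projection to descend and trim, stationarity both to produce enough $M$'s and to replace ``nonstationarily many $M$ guess $A$'' by a single witnessing function, the Isomorphism Property to make the descent depend only on $\gamma$, and $\Pi_1(V)$-definability so that the descent preserves $\Gamma$. Once this equivalence is available the diagonalization closes exactly as in Laver's theorem. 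For the first assertion of the theorem it then remains only to check that, under $\mathfrak{c} = \omega_2$, the class $\text{GIC}_{\omega_1}$ is stationary, projects downward, is $\Pi_1(V)$-definable in a parameter from $H_{\omega_3}$, and satisfies the Isomorphism Property --- which is precisely where the $\omega_1$-guessing and internal-club hypotheses get used.
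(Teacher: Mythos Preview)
Your overall strategy---a Laver-style diagonalization, with the supercompact embedding replaced by stationarity of $\Gamma$ and elementarity of collapse maps, and with $\Pi_1$-definability used to make membership in $\Gamma$ survive collapse---matches the paper's. But there is a genuine gap in your use of the Isomorphism Property, and it breaks your recursion as stated.

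You write that the Isomorphism Property ``guarantees that all $M \in \Gamma$ with $\kappa_M = \gamma$ have isomorphic collapses,'' so that ``the collapse structure living at level $\gamma$'' is well-defined and $\ell(\gamma)$ can be read off it. That is not what the property says: it only guarantees that among such collapses, one is always a \emph{hereditary initial segment} of the other. At a fixed $\gamma$ there may well be a proper chain of collapses of increasing height, so ``the collapse structure at level $\gamma$'' is not a single object. The paper's fix is to pair the Isomorphism Property with a minimality condition: at stage $\alpha$ one looks for a transitive $W$ which is the collapse of some $M \in \Gamma$ with $M \cap \kappa = \alpha$ \emph{and} which believes its own largest cardinal is the \emph{least} cardinal at which a non-$\Gamma$-Laverness witness for $F \restriction \alpha$ appears. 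The Isomorphism Property linearly orders the candidate $W$'s by initial segment; $\Pi_1$-definability makes $\Gamma^W$ cohere along this chain; and then upward $\Sigma_1$-absoluteness of ``there is a non-Laverness witness in $H_\theta$'' plus the minimality clause force uniqueness of $W$. Only then can one set $F(\alpha)$ to be \emph{any} witness inside that unique $W$.

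Your verification step has a related problem. You want $\pi_M(A)$ to coincide with the counterexample chosen at stage $\kappa_M$, invoking ``minimality of $\mathrm{rank}(A)$.'' But minimal rank does not single out a unique set, and without an absolute well-order you cannot make the choice canonical. The paper sidesteps this entirely: it does \emph{not} try to show $F(\alpha) = \pi_{M'}(A)$. Instead it pushes the already-chosen $\bar b := F(\alpha)$ \emph{up} via $\sigma_{M'}$ to $b \in V$, uses elementarity to see that $b$ is a non-Laverness witness in $V$ (with some algebra $\mathcal{A}$ on $H_\theta$), and then uses downward projection to get $M := M' \cap H_\theta \in \Gamma$, which is elementary in $\mathcal{A}$ and satisfies $\sigma_M^{-1}(b) = \bar b = F(M \cap \kappa)$---contradicting that $b$ was a witness. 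The direction of the final step is thus reversed from your sketch, and it is exactly this reversal that removes the need for any canonical choice of counterexample.
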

Our proof of Theorem \ref{thm_StrengthenViale} gives an explicit construction of the $\lav{\Gamma}$ function; namely, the function defined recursively in Claims \ref{clm_AtMostOne} and  \ref{clm_ExplicitConstructionLaver}. 

We also prove---as is the case with the weaker $\Diamond_\kappa$ and its 2-cardinal variants---that the principle $\lav{\kappa}$ is quite prevalent in models of set theory:  
\begin{theorem}\label{thm_MainTheoremStatCondens}
If Stationary Condensation holds at $\kappa$ and $\kappa$ is a successor cardinal, then $\lav{\kappa}$ holds.  More generally, if the appropriate version of weak Chang's Conjecture fails at $\kappa$, then $\lav{\kappa}$ holds.
\end{theorem}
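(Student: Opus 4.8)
The plan is to reduce everything to the second, more general clause and then to run a Laver-style recursion in which the failure of weak Chang's Conjecture plays the structural role that the Isomorphism Property plays in Theorem~\ref{thm_StrengthenViale}; concretely, the construction should be a close cousin of the function defined recursively in Claims~\ref{clm_AtMostOne} and~\ref{clm_ExplicitConstructionLaver}. For the reduction I would first argue that Stationary Condensation at $\kappa$ implies the failure of the appropriate version of weak Chang's Conjecture at $\kappa$: a stationary‑condensing family yields stationarily many $M\prec H_{\kappa^+}$ of size $<\kappa$ with $M\cap\kappa\in\kappa$ whose transitive collapses are (elementary in) an initial segment $H_{\bar\kappa^+}$, so that $\bar\kappa=M\cap\kappa$ is, inside the collapse, a cardinal whose successor is computed correctly and $\operatorname{otp}(M\cap\kappa^+)<\bar\kappa^+$; this contradicts the "lifting" that weak Chang's Conjecture would demand. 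Hence it suffices to prove that the failure of $\mathrm{wCC}$ at $\kappa$ implies $\lav{\kappa}$.

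For the construction, fix an algebra $\mathfrak A^{*}=(H_{\kappa^+},\in,<^{*},\dots)$ expanding $H_{\kappa^+}$ by a wellorder, Skolem functions, and a witness to the failure of $\mathrm{wCC}$, together with the threshold ordinal $\alpha^{*}<\kappa$ below which weak Chang may still misbehave. The point of the failure of $\mathrm{wCC}$ at $\kappa$ is that for club‑many $\delta<\kappa$, any $M\prec\mathfrak A^{*}$ of size $<\kappa$ with $M\cap\kappa=\delta\geq\alpha^{*}$ is rigid along its trace on $\kappa^{+}$: the value $\sup(M\cap\kappa^{+})$, hence $\operatorname{otp}(M\cap\kappa^{+})$, and then the isomorphism type of the transitive collapse $\pi_M[M]\in H_\kappa$, are pinned down by $\delta$. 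This is the analogue of Claim~\ref{clm_AtMostOne}: at club‑many stages $\delta$ there is at most one collapsed structure $\bar N$ that can arise from such a good $M$ together with the assumption that $F\!\upharpoonright\!\delta$ has not yet anticipated its distinguished object. One then defines $F\!\upharpoonright\!\kappa$ by recursion on $\delta$ as in Claim~\ref{clm_ExplicitConstructionLaver}: given $F\!\upharpoonright\!\delta$, let $F(\delta)$ be the object coded inside this unique $\bar N$ when it exists and $\pi_N(\mathfrak A^{*},F\!\upharpoonright\!\kappa)=(\bar N,F\!\upharpoonright\!\delta)$, and $F(\delta)=\emptyset$ otherwise; the rigidity clause makes this well defined.

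To verify that $F$ witnesses $\lav{\kappa}$, fix $A\in H_{\kappa^+}$ and an arbitrary club $\mathcal C\subseteq\wp_\kappa(H_\theta)$ for large regular $\theta$. Since the collection of $M$ that are good in the above sense, elementary in $(H_\theta,\in,<^{*},\mathfrak A^{*},F,\mathcal C,A)$, and satisfy $M\cap\kappa\in\kappa$ contains a club, one can choose such an $M\in\mathcal C$ with $A\in M$; at the stage $\delta=M\cap\kappa$, elementarity pushes the recursive clause down, $\pi_M[M]$ is the witnessing $\bar N$, and rigidity forces $F(\delta)=\pi_M(A)$, so $M$ guesses $A$ via $F$. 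As $\mathcal C$ was arbitrary, these $M$ form a stationary set, which is exactly $\lav{\kappa}$. The delicate point — the place where the full strength of the failure of $\mathrm{wCC}$ (rather than mere cardinal arithmetic) is consumed — is choosing $\mathfrak A^{*}$ so robustly that the "good" ordinals $\delta$ form a \emph{club} while, for every externally given $\mathcal C$, there still remain \emph{stationarily} many good $\delta$ carrying a model of $\mathcal C$ that contains all the prescribed parameters; threading "good" and "guessing" together is the crux. Verifying that the default value $\emptyset$ and stages with no candidate do no harm, and that the collapse genuinely codes $A$ recoverably once $|M|<\kappa$, is routine.
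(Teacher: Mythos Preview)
Your reduction of the first clause to the second (Stationary Condensation at a successor $\kappa$ implies failure of the relevant weak Chang's Conjecture) is essentially correct and matches Lemma~\ref{lem_StatCondenseSuccessorFail_CC}.  The problem is the main construction.

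You claim that the failure of $\mathrm{wCC}$ at $\kappa$ yields, for club-many $\delta<\kappa$, a \emph{rigidity} statement: that for any $M\prec\mathfrak A^{*}$ with $M\cap\kappa=\delta$, the ordertype $\operatorname{otp}(M\cap\kappa^{+})$ and hence the isomorphism type of the collapse $H_M$ are ``pinned down by $\delta$''.  This is false.  The failure of $\mathrm{wCC}$ says only that for club-many $\delta$,
\[
\sup\{\operatorname{otp}(M\cap\kappa^{+}) : M\prec\mathfrak A^{*},\ M\cap\kappa=\delta\}<\delta^{+},
\]
i.e.\ there are at most $|\delta|$-many possible ordertypes, not a unique one.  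Different $M$ with the same $\delta$ can and typically do have different collapses; nothing in $\neg\mathrm{wCC}$ prevents this.  So the analogue of Claim~\ref{clm_AtMostOne} fails, your recursive definition of $F(\delta)$ is not well-defined, and the verification step---where you need the collapse of the externally chosen $M$ to coincide with the structure used at stage $\delta$---breaks down.

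The paper's argument is quite different and does not attempt a Laver-style recursion.  Instead it exploits exactly the cardinality bound that $\neg\mathrm{wCC}$ actually gives: one sets $F(\alpha)$ equal to the \emph{union} of all collapses $H_M$ with $M\cap\kappa=\alpha$ (Definition~\ref{def_AttemptLaverMinus}), and the bound $|F(\alpha)|\le|\alpha|$ makes this a $\lavminus{\Gamma}$ function (Lemma~\ref{lem_ConstructLaver}).  The passage from $\lavminus{\Gamma}$ to $\lav{\Gamma}$ is then a separate combinatorial step, a generalization of Kunen's $\Diamond^{-}\Rightarrow\Diamond$ argument (Theorem~\ref{thm_GeneralizeKunenDiamondMinus}), using Fodor to select one of $\kappa$-many candidate functions.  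The non-uniqueness you tried to avoid is absorbed into the ``minus'' version and then removed by pigeonhole, not by rigidity.
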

Combining this theorem with the results of Friedman-Holy~\cite{MR2860182} yields:
\begin{corollary}
If $\kappa$ is a successor cardinal, then there is a $\kappa$-directed closed class forcing extension that models $\lav{\kappa}$.
\end{corollary}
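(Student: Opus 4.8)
The plan is to take, as the witnessing forcing, the class forcing of Friedman--Holy from \cite{MR2860182}, and to check that its extension meets the hypotheses of Theorem~\ref{thm_MainTheoremStatCondens}. Write $\kappa = \mu^+$. The steps are: (i) recall from \cite{MR2860182} that the relevant Friedman--Holy forcing $\mathbb{P}$ --- namely the portion of their iteration responsible for condensation at $\kappa$ --- is $\kappa$-directed closed and forces their local condensation principle at $\kappa$; (ii) observe that in $V^{\mathbb{P}}$ this yields Stationary Condensation at $\kappa$ (or, directly, the failure of the appropriate weak Chang's Conjecture at $\kappa$), i.e.\ the Condensation hypothesis of Theorem~\ref{thm_MainTheoremStatCondens}; (iii) verify that $\kappa$ is still a successor cardinal in $V^{\mathbb{P}}$; and (iv) apply Theorem~\ref{thm_MainTheoremStatCondens} inside $V^{\mathbb{P}}$ to obtain $\lav{\kappa}$ there.

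Step (iii), which is the only place the hypothesis ``$\kappa$ is a successor cardinal'' enters, is immediate: since $\mathbb{P}$ is $\kappa$-directed closed it is $<\kappa$-closed, hence adds no new subsets of $\mu$; therefore $\wp(\mu)^{V^{\mathbb{P}}} = \wp(\mu)^{V}$, so $(\mu^+)^{V^{\mathbb{P}}} = (\mu^+)^{V} = \kappa$, and $\kappa$ remains a successor cardinal (and all cardinals and cofinalities $\le\kappa$ are preserved). Note that no cardinal preservation \emph{above} $\kappa$ is needed for this, so the class-forcing aspects of $\mathbb{P}$ play no further role here. If the construction in \cite{MR2860182} presupposes a mild ground-model assumption --- e.g.\ GCH at and above $\kappa$ --- one first forces it by a $\kappa$-directed closed reverse Easton preparation $\mathbb{Q}$ and works with $\mathbb{Q} * \dot{\mathbb{P}}$, which is still $\kappa$-directed closed and still adds no subsets of $\mu$, so that steps (ii)--(iv) go through unchanged.

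The substantive work, and the expected main obstacle, is step (i): one has to go into \cite{MR2860182}, isolate a \emph{$\kappa$-directed closed} class forcing that already forces the condensation hypothesis at the single cardinal $\kappa$ (the full ``condensation everywhere'' iteration has an initial segment below $\kappa$ that need not be $\kappa$-directed closed, so some care is required to pick out the right piece), and then reconcile their ``Local Club Condensation''-type conclusion with the precise ``Stationary Condensation'' / weak-Chang's-Conjecture formulation that Theorem~\ref{thm_MainTheoremStatCondens} consumes. Once those identifications are made, steps (ii)--(iv) are routine and the corollary follows.
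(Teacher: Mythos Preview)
Your proposal is correct and follows essentially the same route as the paper: combine Theorem~\ref{thm_MainTheoremStatCondens} with the Friedman--Holy result (stated in the paper as Theorem~\ref{thm_FriedmanHoly}), using $\kappa$-directed closure to see that $\kappa$ remains a successor. Your worries in step~(i) are somewhat overstated: the paper simply quotes Friedman--Holy's theorem in exactly the form needed (a $\kappa$-directed closed class forcing yielding Stationary Condensation at $\kappa$), so no further isolation or reconciliation of condensation notions is required.
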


We also prove:
\begin{theorem}\label{thm_L_LaverPlus}
$L$ satisfies $\lavplus{\kappa}$ whenever $\kappa$ is a successor cardinal, or whenever the appropriate version of weak Chang's Conjecture fails at $\kappa$.  By Theorem \ref{thm_GeneralizeKunenDiamondMinus} $\lav{\kappa}$ also holds for such $\kappa$.
\end{theorem}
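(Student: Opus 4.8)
The plan is to prove the statement working inside $L$ (equivalently, assuming $V=L$), by combining two ingredients. First, that in $L$ the appropriate \emph{club} strengthening of Stationary Condensation holds at $\kappa$ under either of the two hypotheses. Second, that this strengthening feeds into the Laver-diamond machinery of Theorems \ref{thm_StrengthenViale} and \ref{thm_MainTheoremStatCondens} to produce a witness for $\lavplus{\kappa}$ --- a small family of candidate values at each coordinate whose correctness holds on a \emph{club}, rather than merely stationarily, of the relevant models. Since $L$ satisfies GCH we have $H_{\kappa^+}=L_{\kappa^+}$, so every set $x$ to be guessed lies in some level $L_\beta$ with $\beta<\kappa^+$; this is what makes condensation applicable.

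For the condensation ingredient I would argue as in Jensen's proof that $\Diamond^+_\kappa$ holds in $L$. Given $x\in L_{\kappa^+}$ and a prescribed club $D\subseteq\kappa$, fix the $<_L$-least code for $x$, lying in $L_{\beta_x}$ with $\beta_x<\kappa^+$ chosen so that $L_{\beta_x}\models$ ``$\kappa$ is the largest cardinal,'' and build a continuous increasing $\kappa$-chain $\langle N_\xi:\xi<\kappa\rangle$ of elementary substructures of $(L_{\beta_x},\in,<_L)$, each of size $<\kappa$, with $x,\kappa\in N_0$, union $L_{\beta_x}$, and $\alpha_\xi:=N_\xi\cap\kappa$ continuous and increasing. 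Then $\{\alpha_\xi:\xi<\kappa\}$ is club in $\kappa$, and Jensen's Condensation Lemma gives transitive collapses $\pi_{N_\xi}:N_\xi\cong L_{\bar\beta_\xi}$ onto levels of $L$, with $\pi_{N_\xi}(x)\in L_{\bar\beta_\xi}$ and $L_{\bar\beta_\xi}\models$ ``$\alpha_\xi$ is the largest cardinal.'' The fine-structural heart of the matter --- exactly as in Jensen's $\Diamond^+$ argument --- is to choose the guessing family $\mathcal F_\alpha$ to be the correct $L$-level of size $|\alpha|$ (roughly, the supremum of the levels in which $\alpha$ is still the largest cardinal, cut where $|\alpha|$ gets collapsed), so that $\pi_{N_\xi}(x)\in\mathcal F_{\alpha_\xi}$ for every $\xi$; intersecting with $D$ then gives $\mathcal F_\alpha$-correctness on the club $\{\alpha_\xi:\xi<\kappa\}\cap D$.

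It remains to check that this argument is available under either stated hypothesis. When $\kappa$ is a successor cardinal, the $\square$-sequences of $L$ (Jensen) make the set of closure points of such a chain a genuine club, so the construction runs as written. When instead the appropriate version of weak Chang's Conjecture fails at $\kappa$, that very failure is the combinatorial input guaranteeing that the closure points which additionally land in the relevant condensing class $\Gamma$ still form a club, and not merely a stationary set; the two hypotheses thus converge on the same conclusion --- ``Club Condensation at $\kappa$'' --- and the Laver construction proceeds identically. Finally, the $\lavplus{\kappa}$ witness so obtained is, being a $\Diamond^-$-flavored statement (a small family of candidate values per coordinate), a priori only enough to yield $\lavminus{\kappa}$; from this, Theorem \ref{thm_GeneralizeKunenDiamondMinus} extracts an honest $\lav{\kappa}$ function, completing the second half of the statement.

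I expect the principal obstacle to be the club-versus-stationary upgrade in the weak-Chang case: one must match the closure points manufactured by the chain-of-submodels argument against an \emph{arbitrary} prescribed club while remaining inside the condensing class $\Gamma$, and recognize the failure of the appropriate weak Chang's Conjecture as exactly the hypothesis making this uniform over all $x$ and $D$. The secondary, more routine difficulty is the fine-structural bookkeeping of the second paragraph --- pinning down $\mathcal F_\alpha$ so that every relevant collapse $\pi_{N_\xi}(x)$ lands inside it while keeping $|\mathcal F_\alpha|\le|\alpha|$ --- which is classical but must be arranged carefully from the start.
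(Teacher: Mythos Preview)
Your outline has the logical architecture inverted, and this is a genuine gap rather than a stylistic difference from the paper. In $L$, Club Condensation at $\kappa$ holds \emph{automatically} for every regular uncountable $\kappa$: the sequence $\vec{M}=\langle L_\eta:\eta<\kappa\rangle$ already witnesses that club-many $X\in\wp_\kappa(H_\theta)$ collapse to an initial segment of $L$, with no appeal to $\square$, no appeal to $\kappa$ being a successor, and no appeal to any failure of weak Chang's Conjecture. So the ``club upgrade'' you spend most of your effort securing is free. In particular, your claim that the failure of weak Chang's Conjecture is ``exactly the hypothesis making [the club intersection] uniform'' is wrong: that failure plays no role in the club part of the argument. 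Likewise, Theorem~\ref{thm_StrengthenViale} (the Isomorphism-Property route) is irrelevant to $L$; only the condensation machinery of Section~\ref{sec_CondensationChang} is used.

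What the two hypotheses are actually needed for is the step you dismiss as ``secondary, more routine'': bounding $|\mathcal F_\alpha|\le|\alpha|$. The paper's function is simply $F^{\Gamma}(\alpha)=\bigcup\{H_X: X\in\Gamma_{\vec M},\ X\cap\kappa=\alpha\}$; this always guesses correctly on a club (Lemma~\ref{lem_ConstructLaver}), and the only question is whether its values are small. If $\kappa=\mu^+$, then for $\alpha>\mu$ every such $H_X$ believes $\alpha$ is a cardinal, so $H_X$ sits below the first $L$-level that collapses $\alpha$, giving $|F^\Gamma(\alpha)|<\kappa=\alpha^+$ (Lemma~\ref{lem_StatCondenseSuccessorFail_CC}). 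If instead $\text{wCC}(\Gamma_{\vec M})$ fails, Lemma~\ref{lem_wCC_wCCstar_equiv} converts this to failure of $\text{wCC}^*(\Gamma_{\vec M})$, which is literally the statement $|F^\Gamma(\alpha)|<\alpha^+$ off a nonstationary set. Either way Corollary~\ref{cor_Failure_wCC_implies_Laver} yields $\lavplus{\kappa}$. Your fine-structural description of $\mathcal F_\alpha$ as ``the supremum of the levels in which $\alpha$ is still the largest cardinal'' is essentially this object in the successor case, but you have not explained why it has size $\le|\alpha|$ in the non-successor case---and that is precisely where the weak-Chang hypothesis enters.
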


The paper is structured as follows:  Section \ref{sec_LaverFunctions} provides the definitions and basic facts about $\lav{\kappa}$ and its variants; Section \ref{sec_GeneralizeKunen} proves that $\lav{\kappa}$ is equivalent to $\lavminus{\kappa}$; Section \ref{sec_CondensationChang} proves Theorems \ref{thm_MainTheoremStatCondens} and \ref{thm_L_LaverPlus}; Section \ref{sec_StrengthenViale} proves Theorem \ref{thm_StrengthenViale}; and Section \ref{sec_Conclusion} concludes with a question.  

We will use the following notation throughout the paper:
\begin{definition}
Suppose $X$ is a set and $\Big( X, \in \restriction (X \times X) \Big)$ is extensional.\footnote{e.g. if $X \prec (H_\theta, \in)$ for some $\theta$.} Then $H_X$ denotes the transitive collapse of $X$ and $\sigma_X: H_X \to X$ denotes the \textbf{inverse} of the Mostowski collapsing map of $X$.
\end{definition}

We also will use the following convention:
\begin{convention}
If $p_1, \dots, p_n$ are each elements of some $H_\theta$, then each $p_i$ in the structure $(H_\theta, \in, p_1, \dots, p_n)$ will be understood to be the natural interpretation of a constant symbol $\dot{p}_i$.  
\end{convention}

The author would like to thank the anonymous referee for many helpful suggestions, especially regarding the structure of the proof of Theorem \ref{thm_GeneralizeKunenDiamondMinus}.

\section{$\lav{\Gamma}$, $\lavplus{\Gamma}$, and $\lavminus{\Gamma}$ }\label{sec_LaverFunctions}

We recall the definition of Generic Laver Diamond from Viale~\cite{Viale_GuessingModel}:  a function $F:\kappa \to H_\kappa$ is a Generic Laver function iff for every set $b$ and for every sufficiently large regular $\theta$, there are stationarily many $M \in \wp_\kappa(H_\theta)$ such that $b \in M$ and the Mostowski collapse of $M$ sends $b$ to $F(M \cap \kappa)$.\footnote{Viale's formulation was actually more similar to the characterization in Lemma \ref{lem_LaverFcn_As_GenericLargeCardinal} below.}  The role of the class $\Gamma$ in the following definition is to allow for refinements; similarly to the way that $\Diamond(S)$ refines $\Diamond_\kappa$ (where $S$ is a stationary subset of $\kappa$).  
\begin{definition}\label{def_MainDef}
Suppose $\kappa$ is a regular uncountable cardinal and $F: \kappa \to V$ is a function.  If $b$ is a set and $\theta$ is a regular uncountable cardinal, define $G^{b,\theta}_{=F}$ to be the set of those $M$ such that:
\begin{enumerate}[label=(\alph*)]
 \item $b \in M \prec (H_{\theta}, \in)$
 \item $M \cap \kappa \in \kappa$
% \item $(M, \in)$ is extensional
 \item\label{item_M_guesses} $\sigma_M^{-1}(b) = F(M \cap \kappa)$ (recall that $\sigma_M:H_M \to M$ denotes the inverse of the Mostowski collapse of $M$)
\end{enumerate}
For any class $\Gamma$, we say that $F$ is a $\lav{\Gamma}$ function iff  for every set $b$ and every sufficiently large regular $\theta$, the following set is stationary:
\begin{equation*}
\Gamma \cap G^{b,\theta}_{=F} 
\end{equation*}
We say that $\Diamond_{\text{Lav}}(\Gamma)$ holds iff there is a $\Diamond_{\text{Lav}}(\Gamma)$ function.  
\end{definition}

We also define variants of Generic Laver Diamond which are analogous to what Kunen~\cite{MR597342} calls $\Diamond^-$ and $\Diamond^+$:
\begin{definition}\label{def_LavMinus}
Suppose $\kappa$ is a regular uncountable cardinal and $F: \kappa \to V$ is a function.  For each set $b$ and regular uncountable $\theta$, the set $G^{b,\theta}_{\in F}$ is defined the same way that $G^{b,\theta}_{=F}$ was defined in Definition \ref{def_MainDef} except requirement \ref{item_M_guesses} is replaced with the following requirement:
\begin{enumerate}
 \item[\ref{item_M_guesses}'] $\sigma_M^{-1}(b) \in F(M \cap \kappa)$
\end{enumerate}
For any class $\Gamma$, we say that $F$ is a $\lavminus{\Gamma}$ (resp. $\Diamond^+_{\text{Lav}}(\Gamma)$) function iff
\begin{enumerate}[label=(\Alph*)]
 \item\label{item_Lavminus_guessing} For every set $b$ and all sufficiently large regular $\theta$:  $\Gamma \cap G^{b,\theta}_{\in F}$ is stationary (resp. contains all but nonstationarily many elements of $\Gamma \cap \wp(H_\theta)$)
 \item\label{item_Lavminus_small}  $|F(\alpha)| \le |\alpha|$ for every $\alpha < \kappa$. 
\end{enumerate}
We say $\lavminus{\Gamma}$ (resp. $\lavplus{\Gamma}$) holds iff there exists a $\lavminus{\Gamma}$ (resp. $\lavplus{\Gamma}$) function.
\end{definition}

\begin{remark}
In the special case where $\Gamma$ is the natural class $\wp_\kappa(V)$, we will just write $\lav{\kappa}$ to mean $\lav{\Gamma}$, and similarly for $\lavminus{\kappa}$ and $\lavplus{\kappa}$.
\end{remark}

There are obvious local variations of Definitions \ref{def_MainDef} and \ref{def_LavMinus}, but we will not make use of such local variations in this paper.\footnote{Namely, one could weaken $\lav{\Gamma}$ by restricting attention to only those $b$ up to some fixed cardinality.}

\begin{remark}\label{remark_ReplaceFunction}
If $\Gamma \subseteq \wp_\kappa(V)$ and $F$ is a $\Diamond_{\text{Lav}}(\Gamma)$ function, then so is
\begin{equation*}
F \restriction \{ \alpha < \kappa \ | \ F(\alpha) \in H_\kappa  \}
\end{equation*}
\end{remark}
\begin{proof}
Let $b$ be any set.  Let $\theta$ be a sufficiently large regular cardinal so that $\Gamma \cap G^{b,\theta}_{=F}$ is stationary.  Since $\Gamma \subseteq \wp_\kappa(V)$ then $H_M$ is an element of $H_\kappa$ for every $M \in \Gamma \cap G^{b,\theta}_{=F}$; so $F(M \cap \kappa) = \sigma_M^{-1}(b) \in H_M \in H_\kappa$ for any such $M$.
\end{proof}

\begin{remark}
If $\Gamma \subseteq \Gamma'$ and $\lav{\Gamma}$ holds, then so does $\lav{\Gamma'}$.  This is analogous to the trivial fact that $\Diamond(S)$ implies $\Diamond(S')$ whenever $S \subseteq S'$ are stationary subsets of $\kappa$.
\end{remark}

Definition \ref{def_MainDef} can also be rephrased in terms of generic elementary embeddings, a characterization which more closely resembles the definition of supercompact Laver functions.  This is the version which appeared in Viale~\cite{Viale_GuessingModel} (albeit in the presence of Woodin cardinals).  While the following lemma is technically a second-order scheme, it is also possible to obtain a first-order definition involving generic embeddings.\footnote{Namely:  in the statement of Lemma \ref{lem_LaverFcn_As_GenericLargeCardinal}, if one replaces ``there is a generic elementary embedding $j: V \to N$"  with ``there is a normal ideal $\mathcal{I}$ such that $\Vdash_{(\mathcal{I}^+, \subseteq)}$  the ultrapower embedding $j: H^V_{(2^\theta)^+} \to_{\dot{G}} \text{ult}(H^V_{(2^\theta)^+}, \dot{G})$ has critical point $\kappa$ and has the following properties \dots" then the resulting statement is first order, and the proof is similar to the proof given here. } \begin{lemma}\label{lem_LaverFcn_As_GenericLargeCardinal}
A function $F: \kappa \to V$ is a $\Diamond_{\text{Lav}}(\Gamma)$-function if and only if for every $b$ and every sufficiently large regular $\theta$,  there is a generic elementary embedding $j: V \to N$ with critical point $\kappa$ such that:
\begin{enumerate}
 \item $H^V_\theta$ is an element of the (transitivized) wellfounded part of $N$;
 \item $j[H^V_\theta] \in N$;
 \item $j(F)(\kappa) = b$; and
 \item $N \models j[H^V_\theta] \in j(\Gamma)$
\end{enumerate}
\end{lemma}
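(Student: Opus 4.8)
The plan is to prove Lemma \ref{lem_LaverFcn_As_GenericLargeCardinal} by passing between two equivalent pictures of ``$M$ guesses $b$ via its collapse'': the combinatorial one (stationarily many $M \in \wp_\kappa(H_\theta)$ with $\sigma_M^{-1}(b) = F(M\cap\kappa)$ and $M \in \Gamma$) and the generic-embedding one. The bridge is the standard generic ultrapower construction: for a sufficiently large regular $\theta$, consider the partial order whose conditions are (names for / pairs giving) the non-stationary ideal on $\wp_\kappa(H_\theta)$ restricted to $\Gamma \cap G^{b,\theta}_{=F}$, or more simply force with $\Gamma \cap G^{b,\theta}_{=F}$ itself ordered by $\supseteq$-mod-club; a generic filter selects (a ``generic point'') an $M$-like object, and the associated generic ultrapower $j \colon V \to N$ has critical point $\kappa$ (since each $M \cap \kappa \in \kappa$).

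\medskip

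First I would prove the forward direction ($\Rightarrow$). Assume $F$ is a $\lav{\Gamma}$ function, fix $b$ and a sufficiently large regular $\theta'$ witnessing stationarity of $\Gamma \cap G^{b,\theta}_{=F}$ for all regular $\theta \ge \theta'$; given the target $\theta$ in the statement, apply the hypothesis at some regular $\theta^* > 2^\theta$ so that $\Gamma \cap G^{b,\theta^*}_{=F}$ is stationary, and force with this stationary set. In the extension we obtain the generic ultrapower $j \colon V \to N = \mathrm{ult}(V, U)$ where $U$ is the $V$-normal $V$-ultrafilter derived from the generic. Normality and fineness give $\mathrm{crit}(j) = \kappa$ and $j[H^V_{\theta^*}] = [\mathrm{id}]_U \in N$; hence also $j[H^V_\theta] \in N$ and $H^V_\theta$ sits in the wellfounded part of $N$ (both are elements of $j(H^V_{\theta^*})$, which is wellfounded enough by the choice $\theta^* > 2^\theta$). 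The point of guessing is: for generic $M$ we have $\sigma_M^{-1}(b) = F(M\cap\kappa)$; under the ultrapower this translates, via Łoś's theorem applied to the identity function, into $j(F)(\kappa) = j(F)([\mathrm{id}\cap\kappa]) = [\,\alpha \mapsto F(M_\alpha\cap\kappa)\,] = [\,\alpha \mapsto \sigma_{M_\alpha}^{-1}(b)\,] = b$ (since $\sigma_M^{-1}(b)$ corresponds, under the collapse, precisely to the class of $b$). Likewise $M \in \Gamma$ for generic $M$ gives, by Łoś, $N \models [\mathrm{id}] \in j(\Gamma)$, i.e. $N \models j[H^V_\theta] \in j(\Gamma)$ (using that $j[H^V_\theta]$ is the restriction of $[\mathrm{id}] = j[H^V_{\theta^*}]$, which $\Gamma$'s downward projection handles).

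\medskip

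For the reverse direction ($\Leftarrow$), suppose that for every $b$ and every sufficiently large regular $\theta$ there is such a generic embedding $j \colon V \to N$. Fix $b$ and a witnessing $\theta$, let $\mathbb{P}$ be the forcing producing $j$, and suppose toward a contradiction that $\Gamma \cap G^{b,\theta}_{=F}$ is non-stationary, as witnessed by an algebra / club-shooting function $h \colon [H^V_\theta]^{<\omega} \to H^V_\theta$ all of whose closure points either fail to lie in $\Gamma$ or fail to guess $b$. Now apply $j$: in $N$, $j[H^V_\theta]$ is closed under $j(h)$ (standard), and by clause (4) $N \models j[H^V_\theta] \in j(\Gamma)$, while by clause (3), $j(F)(\kappa) = b$ translates downstairs — reading off the transitive collapse of $j[H^V_\theta]$, which is canonically isomorphic to $H^V_\theta$ with the collapse inverse acting like $j \restriction H^V_\theta$, so that $\sigma^{-1}(b) = $ the image of $b$, and $\kappa = j[H^V_\theta] \cap j(\kappa)$ — into $N \models$ ``$j[H^V_\theta]$ guesses $b$''. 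Thus $N \models j[H^V_\theta] \in j(\Gamma \cap G^{b,\theta}_{=F})$ and is closed under $j(h)$, contradicting $j$ of the statement that every $h$-closed member of $\Gamma$ fails to guess. Hence $\Gamma \cap G^{b,\theta}_{=F}$ is stationary; since $b$ and $\theta$ were arbitrary (for $\theta$ large), $F$ is a $\lav{\Gamma}$ function.

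\medskip

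\textbf{The main obstacle} I expect is the bookkeeping around wellfoundedness and the precise identification ``$\sigma_M^{-1}(b) = F(M\cap\kappa)$ in $V$'' $\longleftrightarrow$ ``$j(F)(\kappa) = b$ computed in (the possibly ill-founded) $N$'': one must be careful that $H^V_\theta$ and $j[H^V_\theta]$ lie in the wellfounded part of $N$ so that $j(F)(\kappa)$, $j(\Gamma)$, and the internal transitive-collapse computation make literal sense, which is why the auxiliary parameter $\theta^* > 2^\theta$ (or $(2^\theta)^+$, matching the footnote's first-order rendering) is needed. Everything else — criticality at $\kappa$, normality/fineness of the derived ultrafilter, $j[H^V_{\theta^*}] = [\mathrm{id}] \in N$, and Łoś — is routine; I would also remark that the downward-projection and parameter hypotheses on $\Gamma$ are exactly what let clause (4) at level $\theta$ follow from, and feed back into, the generic point living at the larger level $\theta^*$.
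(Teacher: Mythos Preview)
Your overall strategy matches the paper's: in the forward direction you force with the nonstationary ideal restricted to $S:=\Gamma\cap G^{b,\theta}_{=F}$ and read off the generic ultrapower, and in the reverse direction you pull back the properties of $M':=j[H^V_\theta]$ from $N$ to $V$ by elementarity. The reverse direction is the same argument as the paper's, merely phrased contrapositively rather than directly.

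There is one genuine flaw in the forward direction. You pass to an auxiliary $\theta^*>2^\theta$, force at level $\theta^*$, and then descend to level $\theta$ using downward projection of $\Gamma$ to obtain clause~(4). But the lemma does \emph{not} assume $\Gamma$ projects downward, and the paper's proof does not use it. The paper simply forces at level $\theta$ itself: with $\mathcal I$ the nonstationary ideal restricted to $S$ and $\mathcal U$ generic for $\wp(S)/\mathcal I$, one has $j[H^V_\theta]=[\mathrm{id}]_{\mathcal U}$ directly, so $N\models j[H^V_\theta]\in j(\Gamma)$ and $j(F)(\kappa)=b$ follow immediately from \L o\'s and the inclusions $S\subseteq\Gamma$, $S\subseteq G^{b,\theta}_{=F}$. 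Your worry about wellfoundedness is misplaced: since $j[H^V_\theta]\in N$ is extensional and of bounded $N$-rank, $N$ can compute its transitive collapse, and that collapse is (canonically identified with) $H^V_\theta$ via $j\restriction H^V_\theta$; no excursion to a larger $\theta^*$ is needed. Once you drop the $\theta^*$ detour, the appeal to downward projection disappears and your argument coincides with the paper's.
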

\begin{proof}
Assume $F:\kappa \to H_\kappa$ is a $\lav{\Gamma}$ function.  Fix some $b$ and let $\theta$ be sufficiently large so that $S:= \Gamma \cap G^{b,\theta}_{=F}$ is stationary (see Definition \ref{def_MainDef} for the meaning of $G^{b,\theta}_{=F}$).  Let $\mathcal{I}$ be the restriction of the nonstationary ideal to the stationary set $S$.  Let $\mathcal{U}$ be $(V, \wp(S)/\mathcal{I})$-generic and $j:V \to_{\mathcal{U}} N_{\mathcal{U}}$ the generic ultrapower embedding; here $N_{\mathcal{U}}$ may not be wellfounded, but standard applications of Los' Theorem imply that $H^V_\theta$ is an element of its (transitivized) wellfounded part and moreover:
\begin{itemize}
 \item $j[H^V_\theta] = [\text{id}]_{\mathcal{U}} \in j(\Gamma)$  (since $S \subseteq \Gamma$)
 \item $j(F)(\kappa) = b$ (since $S \subseteq G^{b,\theta}_{=F}$)
\end{itemize}

Conversely, suppose $F$ is a function from $\kappa \to H_\kappa$ and for every $b$ and sufficiently large $\theta$ there is a generic embedding $j=j_{b,H_\theta}:V \to N$ satisfying the requirements listed in the statement of the lemma.  Let $\mathfrak{A} = (H_\theta, \in, \{b\}, \dots)$ be arbitrary and let $M':=j[H^V_\theta]$.  Then $\sigma_{M'} = j \restriction H^V_\theta$ and $N$ models the following facts about $M'$:
\begin{itemize}
 \item $M' \prec j(\mathfrak{A})$
 \item $j(F)( M' \cap j(\kappa)) = b =  \sigma_{M'}^{-1}(j(b))$ 
 \item $M' \in j(\Gamma)$
\end{itemize}
By elementarity of $j$, $V$ believes there is an $M$ such that $M \prec \mathcal{A}$, $F(M \cap \kappa) = \sigma_{M}^{-1}(b)$, and $M \in \Gamma$.   
\end{proof}

We will use the following definition; many natural classes (e.g.\ classes of internally approachable structures, etc.) have this property:
\begin{definition}\label{def_DownwardProjecting}
A class \emph{$\Gamma$ projects downward} iff whenever $X \in \Gamma$ and $\theta \le \text{sup}(X \cap ORD)$ is a regular uncountable cardinal, then $X \cap H_\theta \in \Gamma$. 
\end{definition}

\section{Equivalence of $\lav{\Gamma}$ with $\lavminus{\Gamma}$}\label{sec_GeneralizeKunen}

Theorem \ref{thm_GeneralizeKunenDiamondMinus} below---which is a generalization of Kunen's proof that $\Diamond^-$ is equivalent to $\Diamond$ (cf. Theorem 7.14 of \cite{MR597342})---is crucial to the proof of Theorem \ref{thm_MainTheoremStatCondens}.  
\begin{theorem}\label{thm_GeneralizeKunenDiamondMinus}
Let $\Gamma$ be a downward-projecting class (in the sense of Definition \ref{def_DownwardProjecting}) and $\kappa$ a regular uncountable cardinal, and suppose that $M \cap \kappa \in \kappa$ for every $M \in \Gamma$.  Then
\begin{equation*}
\Diamond_{\text{Lav}}^-(\Gamma) \iff \Diamond_{\text{Lav}}(\Gamma)
\end{equation*}
\end{theorem}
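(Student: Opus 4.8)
The plan is to prove the two implications separately. The direction $\lav{\Gamma}\Longrightarrow\lavminus{\Gamma}$ is routine: given a $\lav{\Gamma}$ function $F$, set $F^{-}(0):=\emptyset$ and $F^{-}(\alpha):=\{F(\alpha)\}$ for $0<\alpha<\kappa$. Then $|F^{-}(\alpha)|\le|\alpha|$, so requirement \ref{item_Lavminus_small} of Definition \ref{def_LavMinus} holds; and since every $M\prec(H_\theta,\in)$ has $M\cap\kappa$ an infinite ordinal, $G^{b,\theta}_{=F}\subseteq G^{b,\theta}_{\in F^{-}}$ for every set $b$ and every regular $\theta$, so whenever $\theta$ is large enough that $\Gamma\cap G^{b,\theta}_{=F}$ is stationary, so is $\Gamma\cap G^{b,\theta}_{\in F^{-}}$. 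Hence $F^{-}$ witnesses $\lavminus{\Gamma}$. The substance is the reverse implication, which generalizes Kunen's proof that $\Diamond^-$ is equivalent to $\Diamond$ (cf.\ Theorem 7.14 of \cite{MR597342}).

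For $\lavminus{\Gamma}\Longrightarrow\lav{\Gamma}$, let $F$ be a $\lavminus{\Gamma}$ function. First I would normalize it: arguing as in Remark \ref{remark_ReplaceFunction} --- using that every $M\in\Gamma$ has $M\cap\kappa\in\kappa$, so that the models ever relevant have $H_M\in H_\kappa$ --- one may discard those $\alpha$ with $F(\alpha)\notin H_\kappa$ and thereby assume each $F(\alpha)$ is a set lying in $H_\kappa$; with requirement \ref{item_Lavminus_small} this gives $|F(\alpha)|\le|\alpha|<\kappa$ for all $\alpha$. Fix once and for all a well-ordering $W$ of $H_\kappa$; since $|F(\alpha)|\le|\alpha|$, the $W$-increasing enumeration of $F(\alpha)$ has order type $<\kappa$, and from $W$ one obtains a canonical surjection $s_\alpha\colon|\alpha|\twoheadrightarrow F(\alpha)$. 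The key observation, playing the role of Kunen's passage to ``simultaneous'' guessing, is that $\lavminus{\Gamma}$ already has built into it the guessing of arbitrarily much auxiliary data: since the target $b$ in Definition \ref{def_MainDef} may be any set, one may guess a pair $\langle b,c\rangle$ and recover $\langle\sigma_M^{-1}(b),\sigma_M^{-1}(c)\rangle\in F(M\cap\kappa)$, and for suitable $c$ --- for instance $c=\kappa$ (so that $\sigma_M^{-1}(\kappa)=M\cap\kappa$), or $c=W$, or an ``index function'' $c=\beta\colon\kappa\to\kappa$ with $\beta(\xi)<|\xi|$ (so that $\sigma_M^{-1}(\beta)=\beta\restriction(M\cap\kappa)$) --- this forces $F(M\cap\kappa)$ to contain, in a recognizable syntactic form, a pair encoding $\sigma_M^{-1}(b)$.

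With this in hand I would define the sought $\lav{\Gamma}$ function $F^*$ by recursion on $\alpha<\kappa$, letting $F^*(\alpha)=s_\alpha(\beta^*(\alpha))$ be the element of $F(\alpha)$ picked out by a bookkeeping index $\beta^*(\alpha)<|\alpha|$, the sequence $\beta^*$ being chosen so as to anticipate and defeat every potential counterexample. The self-referential engine is that $F^*$ --- hence $\beta^*$ --- is definable from the parameters $F$ and $W$, both of which lie in $H_{\kappa^+}$: if $F^*$ failed for some target $b$ at some regular $\theta$, then the witnessing club $C\subseteq\wp_\kappa(H_\theta)$, together with $b$, $F$, and $W$, would be an element of $H_{\theta'}$ for some larger regular $\theta'$; by elementarity together with the fact that $\Gamma$ projects downward (Definition \ref{def_DownwardProjecting}) this failure would be reflected into stationarily many $M\in\Gamma$ at level $\theta'$, and for such $M$ one has $M\cap H_\theta\in\Gamma\cap C$ while $\lavminus{\Gamma}$ simultaneously delivers $\sigma_{M\cap H_\theta}^{-1}(b)\in F(M\cap\kappa)$; the bookkeeping will have been arranged so that this forces $\sigma_{M\cap H_\theta}^{-1}(b)=F^*(M\cap\kappa)$, a contradiction. (This step can alternatively be organized through the generic-embedding reformulation of Lemma \ref{lem_LaverFcn_As_GenericLargeCardinal}.)

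The step I expect to be the main obstacle is making this bookkeeping precise and verifying that it does what is claimed: one must specify, at each stage $\alpha$, the index $\beta^*(\alpha)<|\alpha|$ --- this is exactly where Kunen's appeal to countability is replaced by $|F(\alpha)|\le|\alpha|<\kappa$, permitting a recursion of length up to $|\alpha|$ internal to stage $\alpha$ --- and one must check that stationarity of $\Gamma\cap G^{b,\theta}_{\in F}$ does transfer to stationarity of $\Gamma\cap G^{b,\theta}_{=F^*}$ using only that $\Gamma$ projects downward and that $M\cap\kappa\in\kappa$ for $M\in\Gamma$, handling in particular the passage between models at level $\theta$ and the auxiliary level $\theta'$ and the agreement of the collapses $\sigma_M^{-1}$ across these two sizes.
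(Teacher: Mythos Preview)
Your proposal has a genuine gap: the ``bookkeeping index'' $\beta^*(\alpha)$ is never defined, and you yourself flag this as the main obstacle.  The difficulty is real.  A recursion on $\alpha$ has access only to $F^*\restriction\alpha$ (equivalently $\beta^*\restriction\alpha$), but at stage $\alpha$ there is no information about which target $b$ one will later need to guess, so there is no apparent way to choose $\beta^*(\alpha)$ that ``anticipates and defeats every potential counterexample.''  The self-referential, least-counterexample style of recursion you gesture at is exactly the mechanism used in the proof of Theorem~\ref{thm_StrengthenViale}, but there it succeeds only because of the Isomorphism Property, which pins down a \emph{unique} model at each $\alpha$ and hence a canonical counterexample to defeat.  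No such uniqueness is available under the bare hypotheses of Theorem~\ref{thm_GeneralizeKunenDiamondMinus}.

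The paper's argument avoids recursion altogether.  Fix an enumeration $F(\alpha)=\{z_{\alpha,i}:i<\alpha\}$ and for each $i<\kappa$ define a candidate $F_i$ outright by $F_i(\alpha):=z_{\alpha,i}(i)$ when $z_{\alpha,i}$ is a function with $i$ in its domain (and $\emptyset$ otherwise).  One then shows that some $F_i$ must be a $\lav{\Gamma}$ function.  If not, for each $i$ choose a counterexample $b_i$ with algebra $\mathfrak{A}_i$, set $B:=\langle b_i:i<\kappa\rangle$, and apply $\lavminus{\Gamma}$ to the single target $B$: stationarily many $M\in\Gamma$ satisfy $\sigma_M^{-1}(B)\in F(M\cap\kappa)$, so $\sigma_M^{-1}(B)=z_{M\cap\kappa,i_M}$ for some $i_M<M\cap\kappa$.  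Fodor fixes $i_M=\hat{i}$ on a stationary set, and then
\[
F_{\hat{i}}(M\cap\kappa)=z_{M\cap\kappa,\hat{i}}(\hat{i})=\sigma_M^{-1}(B)(\hat{i})=\sigma_M^{-1}(b_{\hat{i}}),
\]
while $M\cap H_{\theta_{\hat{i}}}\in\Gamma$ (downward projection) and $M\cap H_{\theta_{\hat{i}}}\prec\mathfrak{A}_{\hat{i}}$, contradicting the choice of $b_{\hat{i}}$.  The trick you were missing is the \emph{coupling}: the same index $i$ both selects the element $z_{\alpha,i}$ of $F(\alpha)$ and the coordinate at which to evaluate it, so that after Fodor the diagonal value $z_{M\cap\kappa,\hat{i}}(\hat{i})$ is simultaneously $F_{\hat{i}}(M\cap\kappa)$ and $\sigma_M^{-1}(b_{\hat{i}})$.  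Your idea of guessing a pair to carry auxiliary data is morally right---here the auxiliary data is the whole sequence $B$---but the resolution is to produce $\kappa$ many candidate functions rather than one recursive one.
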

\begin{proof}
The $\Leftarrow$ direction is trivial, since if $F:\kappa \to H_\kappa$ is a $\lav{\Gamma}$ function then easily $\alpha \mapsto \{ F(\alpha) \}$ is a $\lavminus{\Gamma}$ function.  Now we prove the $\Rightarrow$ direction.  Suppose $F:\kappa \to V$ witnesses $\Diamond_{\text{Lav}}^-(\Gamma)$.  Let $\langle z_{\alpha, i} \ | \  i < \alpha  \rangle$ be an enumeration of $F(\alpha)$ for each $\alpha < \kappa$.  For each $i < \kappa$ define a function $F_i: \kappa \to H_\kappa$ as follows:  if $z_{\alpha,i}$ is a function with $i$ in its domain, then let  $F_i(\alpha):=z_{\alpha,i}(i)$; otherwise $F_i(\alpha):=\emptyset$.   We finish the proof by showing:
\begin{nonGlobalClaim}
There is some $i < \kappa$ such that $F_i$ is a $\lav{\Gamma}$ function.
\end{nonGlobalClaim}
\begin{proof}
Suppose not; so for every $i < \kappa$ there is some $b_i$ and some algebra $\mathfrak{A}_i = (H_{\theta_i}, \in, b_i )$ such that:
\begin{equation}\label{eq_ContradictionAssumption}
\forall M \in \Gamma \ M \prec \mathfrak{A}_i \implies \sigma_M^{-1}(b_i) \ne F_i(M \cap \kappa)
\end{equation}

Let $B:= \langle b_i \ | \ i < \kappa \rangle$ and fix some regular $\Omega$ such that $B$ and $\langle \mathfrak{A}_i, b_i \ | \ i < \kappa \rangle$ are elements of $H_{\Omega}$.  Let $T$ denote the set of $M \in \Gamma$ such that $M \prec \mathfrak{B}:= (H_{\Omega}, \in, B, \vec{\mathfrak{A}}, \vec{b})$ and $\sigma_M^{-1}(B) \in F(M \cap \kappa) $; $T$ is stationary because $F$ is a $\lavminus{\Gamma}$ function.  Now $\sigma_M^{-1}(B) \in F(M \cap \kappa) = \{ z_{M \cap \kappa, i} \ | \ i < M \cap \kappa \} $, so there is some $i_M < M \cap \kappa$ such that $\sigma_M^{-1}(B) = z_{M \cap \kappa, i_M}$.  Since $i_M \in M$ for all $M \in T$ then by Fodor's Lemma there is a stationary $T' \subseteq T$ and some fixed $\hat{i}$ such that $i_M = \hat{i}$ for all $M \in T'$.   Fix an $M  \in T'$.  First observe that 
\begin{equation}\label{eq_PreimageOfB}
\sigma_M^{-1}(B) = \sigma_M^{-1}(\langle b_i \ | \ i < \kappa \rangle) = \langle \sigma_M^{-1}(b_i) \ | \ i \in M \cap \kappa \rangle
\end{equation} 
so in particular $\sigma_M^{-1}(B)$ is a function with $\hat{i}$ in its domain.  Now 
\begin{equation}
 \sigma_M^{-1}(B) = z_{M \cap \kappa, i_M} =  z_{M \cap \kappa, \hat{i}}
\end{equation}
and so in particular
\begin{equation}\label{eq_B_matches_z}
  \sigma_M^{-1}(B) (\hat{i}) = z_{M \cap \kappa, \hat{i}}(\hat{i})
\end{equation}
So (\ref{eq_B_matches_z}) and (\ref{eq_PreimageOfB}) imply:
\begin{equation}\label{eq_SigmaM_matchesAgain}
  \sigma_M^{-1}(b_{\hat{i}}) = z_{M \cap \kappa, \hat{i}}(\hat{i})
\end{equation}
Now by the definition of $F_{\hat{i}}$:
\begin{equation}\label{eq_DefinitionF_i_implies}
F_{\hat{i}}(M \cap \kappa) = z_{M \cap \kappa, \hat{i}}(\hat{i})
\end{equation}
Finally (\ref{eq_DefinitionF_i_implies}) and (\ref{eq_SigmaM_matchesAgain}) imply:
\begin{equation}\label{eq_MainEquality}
  F_{\hat{i}}(M \cap \kappa) = \sigma_M^{-1}(b_{\hat{i}})
\end{equation}
On the other hand, since $M \prec \mathfrak{B}$ and $\hat{i} \in M$ then $M_{\hat{i}}:= M \cap H_{\theta_{\hat{i}}} \prec \mathfrak{A}_{\hat{i}}$; moreover $M_{\hat{i}} \in \Gamma$ by our assumption that $\Gamma$ projects downward.   So (\ref{eq_ContradictionAssumption}) implies:
\begin{equation}\label{eq_NonEqualForCutdown}
F_{\hat{i}}(M_{\hat{i}} \cap \kappa)  \ne \sigma_{M_{\hat{i}}}^{-1}(b_{\hat{i}}) 
\end{equation}
But $M_{\hat{i}} \cap \kappa = M \cap \kappa $ and $\sigma_M^{-1}(b_{\hat{i}}) = \sigma_{M_{\hat{i}}}^{-1}(b_{\hat{i}})$, so (\ref{eq_NonEqualForCutdown}) implies:
\begin{equation}
F_{\hat{i}}(M \cap \kappa) \ne \sigma_M^{-1}(b_{\hat{i}})
\end{equation}
which contradicts (\ref{eq_MainEquality}) and completes the proof of the claim.  
\end{proof}
\end{proof}

\section{A simple proof of 2-cardinal Diamond}\label{sec_TwoCardinalDiamond}

Jech~\cite{MR0325397} introduced a 2-cardinal Diamond principle, which is a guessing principle for subsets of $\wp_\kappa(\lambda)$.  Precisely, $\Diamond(\kappa, \lambda)$ asserts the existence of a function $\langle A_z \ | \ z \in \wp_\kappa(\lambda)  \rangle$ such that for every $A \subset \lambda$, the following set is stationary in $\wp_\kappa(\lambda)$:
\begin{equation*}
\{  z \in \wp_\kappa(\lambda) \ | \ A \cap z = A_z     \}
\end{equation*}
Viale's principle $\lav{\kappa}$ easily implies that $\kappa^{<\kappa} = \kappa$, which in turn---using a theorem of Donder-Matet~\cite{MR1024901} and a correction by Shioya~\cite{MR1738689}---implies that $\Diamond(\kappa, \lambda)$ holds for all $\lambda > \kappa$.\footnote{Donder-Matet~\cite{MR1024901}, with the correction by Shioya~\cite{MR1738689}, proved that $\Diamond(\kappa, \lambda)$ holds \emph{whenever} $2^{<\kappa} < \lambda$.}  However, there is an especially simple, direct proof of the implication
\begin{equation*}
\lav{\kappa} \implies \forall \lambda > \kappa \ \Diamond(\kappa, \lambda)
\end{equation*}
which does not require going through the theorem of Donder-Matet and Shioya (though of course their proof is much more general, as it assumes only that $2^{<\kappa}<\lambda$).  Suppose $F:\kappa \to H_\kappa$ witnesses $\lav{\kappa}$, and let $\lambda \ge \kappa$ be any cardinal.  For each (extensional) $M \in P_\kappa(\lambda)$, recall that $\sigma_M$ denotes the inverse of the Mostowski collapsing map of $M$, and set
\begin{equation*}
A_M:= \sigma_M[F(M \cap \kappa)]
\end{equation*}
Then $\langle A_M \ | \ M \in P_\kappa(\lambda) \rangle$ is a $\Diamond_{\kappa, \lambda}$ sequence: let $A \subseteq \lambda$.  By Laverness of $F$, there is a stationary set $S'$ of $M' \prec (H_{(2^\lambda)^+}, \in, \{ A \})$ such that $\sigma_{M'}^{-1}(A) = F(M' \cap \kappa)$.  Then for any $M' \in S'$, setting $M:= M' \cap \lambda$ we have:
\begin{align*}
A \cap M = A \cap (M' \cap H_\lambda) = \sigma_{M'}(F(M' \cap \kappa)) \cap (M' \cap H_\lambda) \\
= \sigma_{M'} [ F(M' \cap \kappa) ] = \sigma_M[F(M \cap \kappa)] = A_M
\end{align*}
Thus, setting
\begin{equation*}
S:= \{ M' \cap \lambda \ | \ M' \in S'  \}
\end{equation*}
we have that $A \cap M = A_M$ for every $M \in S$.

\section{$\lav{\kappa}$, Condensation, and weak Chang's Conjecture}\label{sec_CondensationChang}

We will describe a natural attempt to define a $\lavminus{\kappa}$ function, which often works in the presence of Condensation and/or the appropriate failure of Chang's Conjecture.  For regular uncountable cardinals $\kappa <\theta$, the Chang's Conjecture $(\theta, \kappa) \twoheadrightarrow (\kappa, < \kappa)$ means that for every first order structure $\mathfrak{A}$ on $\theta$ (in a countable language) there is an $M \prec \mathfrak{A}$ such that $|M| = \kappa$ and $|M \cap \kappa| < \kappa$; this is equivalent to saying that
\begin{equation*}
\{  M \subset \theta \ | \  |M|=\kappa \text{ and } |M \cap \kappa| < \kappa    \}
\end{equation*}
is a (weakly) stationary set.  \emph{Weak Chang's Conjecture holds at $\kappa,\theta$}---abbreviated $\text{wCC}(\kappa,\theta)$---holds iff for every every first-order structure $\mathfrak{A}=(\theta, \in , \dots)$ in a countable language, there are stationarily many $\alpha < \kappa$ such that 
\begin{equation}\label{eq_TallStructuresAtAlpha}
\text{sup} \{ \text{ot}(X \cap \theta) \ | \  X \prec \mathfrak{A} \text{ and } X \cap \kappa = \alpha     \} \ge \alpha^+
\end{equation}
It is easy to see that decreasing the parameter $\theta$ in $\text{wCC}(\kappa, \theta)$ increases the strength; i.e. $\text{wCC}(\kappa, \theta) \implies \text{wCC}(\kappa, \theta')$ whenever $\theta \le \theta'$.  Thus the strongest is when $\theta = \kappa^+$; the  principle  $\text{wCC}(\kappa, \kappa^+)$ is the well-known ``weak Chang's Conjecture at $\kappa$", denoted $\text{wCC}(\kappa)$ in Definition 1.6 of Donder-Levinski~\cite{MR1024901}.  
\begin{remark}
Under $V = L$, a cardinal $\kappa$ is ineffable iff $\text{wCC}(\kappa)$ holds.  See Corollary 1.13 of Donder-Levinski~\cite{MR1024901}.
\end{remark}
\begin{remark}
Standard techniques (e.g. as in Foreman-Magidor~\cite{MR1359154}) enable reformulation of, say, $\text{wCC}(\kappa, \theta)$ which only refers to a single structure.  For example,   fixing a wellorder $\Delta$ of $H_{(2^\theta)^+}$, the principle $\text{wCC}(\kappa, \theta)$ is equivalent to saying there are  stationarily many $\alpha < \kappa$ such that
\begin{equation*}
\text{sup} \{ \text{otp}(X \cap \theta) \ | \ X \prec  (H_{(2^\theta)^+}, \in, \Delta) \text{ and } X \cap \kappa = \alpha  \} \ge \alpha^+
\end{equation*}
\end{remark}

For a regular uncountable $\kappa$ and a class $\Gamma \subset \wp_\kappa(V)$, we say that \emph{$\Gamma$ is $< ORD$ stationary} (resp. \emph{$\Gamma$ is $< ORD$ club}) if $\Gamma \cap \wp_\kappa(H_\theta)$ is stationary (resp. contains a club) for all sufficiently large $\theta$.  Note that $\lav{\Gamma}$ trivially implies that $\Gamma$ is $< ORD$ stationary.

\begin{definition}\label{def_SmallBelowKappa}
Let $\Gamma \subseteq \wp_\kappa(V)$.  We say that $\text{wCC}(\Gamma)$ holds iff 
\begin{enumerate}[label=(\alph*)]
 \item $\Gamma$ is $<ORD$ stationary; and
 \item There are stationarily many $\alpha < \kappa$ such that 
 \begin{equation}\label{eq_Tall}
 \text{sup} \{  \text{otp}(X \cap ORD) \ | \ X \in \Gamma \ \wedge \ X \prec_{\Sigma_1} (V,\in) \ \wedge \  X \cap \kappa = \alpha           \} \ge \alpha^+
 \end{equation}
\end{enumerate}
The principle $\text{wCC}^*(\Gamma)$ is defined similarly, except (\ref{eq_Tall}) is replaced by:
\begin{equation}
\Big| \bigcup  \{ H_X \ | \  X \in \Gamma \ \wedge \ X \prec_{\Sigma_1} (V,\in) \ \wedge \  X \cap \kappa = \alpha             \}  \Big| \ge \alpha^+
\end{equation}
\end{definition}

\begin{observation}\label{obs_SmallnessImpliesShortness}
If $\Gamma \subseteq \wp_\kappa(V)$, the principle $\text{wCC}(\Gamma)$ implies the principle $\text{wCC}^*(\Gamma)$, since
\begin{align*}
\{ \text{otp}(X \cap \text{ORD}) \ | \ X \in \Gamma  \text{ and } X \cap \kappa = \alpha   \} \\
= \{ \text{height}(H_{X \cap ORD}) \ | \ X \in \Gamma  \text{ and } X \cap \kappa = \alpha     \}
\end{align*}
\end{observation}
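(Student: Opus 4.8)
The plan is to show that every $\alpha < \kappa$ witnessing the tallness clause (\ref{eq_Tall}) of $\text{wCC}(\Gamma)$ also witnesses the corresponding largeness clause of $\text{wCC}^*(\Gamma)$; since clause (a) of Definition \ref{def_SmallBelowKappa} (namely ``$\Gamma$ is $< ORD$ stationary'') appears verbatim in both principles, nothing further is required for it, and stationarity of the witnessing set of $\alpha$'s is inherited from $\text{wCC}(\Gamma)$ to $\text{wCC}^*(\Gamma)$.

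The only ingredient is the elementary fact recorded in the displayed equation of the statement. If $X$ is extensional, then the restriction of the collapsing map $\sigma_X^{-1}$ to $X \cap ORD$ obeys the same $\in$-recursion as the Mostowski collapse of the wellorder $(X \cap ORD, \in)$, hence coincides with it; thus its range is an initial segment of the ordinals, namely $\text{otp}(X \cap ORD)$. In other words $H_X \cap ORD = H_{X \cap ORD} = \text{otp}(X \cap ORD)$, so in particular $\text{otp}(X \cap ORD) \subseteq H_X$ for every such $X$. Now fix $\alpha < \kappa$ and write $\mathcal{X}_\alpha$ for the set of those $X \in \Gamma$ with $X \prec_{\Sigma_1}(V,\in)$ and $X \cap \kappa = \alpha$. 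Then
\begin{equation*}
\bigcup \{ H_X \ | \ X \in \mathcal{X}_\alpha \} \ \supseteq \ \bigcup \{ \text{otp}(X \cap ORD) \ | \ X \in \mathcal{X}_\alpha \} \ = \ \text{sup} \{ \text{otp}(X \cap ORD) \ | \ X \in \mathcal{X}_\alpha \},
\end{equation*}
the last equality because a set union of ordinals equals their supremum. Consequently the cardinality of $\bigcup \{ H_X \ | \ X \in \mathcal{X}_\alpha \}$ is at least $|\text{sup} \{ \text{otp}(X \cap ORD) \ | \ X \in \mathcal{X}_\alpha \}|$, so any $\alpha$ for which this supremum is $\ge \alpha^+$ also satisfies $|\bigcup \{ H_X \ | \ X \in \mathcal{X}_\alpha\}| \ge \alpha^+$.

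Therefore the stationary set of $\alpha$ supplied by clause (b) of $\text{wCC}(\Gamma)$ is contained in the set of $\alpha$ meeting the largeness requirement of $\text{wCC}^*(\Gamma)$, which is then stationary as well; together with clause (a) this yields $\text{wCC}^*(\Gamma)$. I do not expect any real obstacle here: the one step that deserves a moment's attention is the standard observation that the Mostowski collapse carries $X \cap ORD$ onto the ordinal $\text{otp}(X \cap ORD)$, so that this ordinal sits inside $H_X$ as a subset and thus contributes its full size to the union on the left-hand side above.
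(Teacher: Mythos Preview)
Your argument is correct and is essentially the paper's own justification, just spelled out in full: the paper records only the displayed equality $\text{otp}(X\cap ORD)=\text{height}(H_{X\cap ORD})$ and leaves the remaining cardinality comparison to the reader, whereas you make explicit that $\text{otp}(X\cap ORD)\subseteq H_X$ and hence the union $\bigcup\{H_X:X\in\mathcal{X}_\alpha\}$ dominates the supremum of the order types. There is no substantive difference in approach.
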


\begin{definition}\label{def_AttemptLaverMinus}
Suppose $\kappa$ is regular and uncountable and that $\Gamma \subseteq \{  X \ | \ X \cap \kappa \in \kappa \}$.   For each $\alpha < \kappa$ define:
\begin{equation}\label{eq_A_alpha}
A^{\Gamma}_\alpha:=  \bigcup \{ H_M \ | \ M \in \Gamma \text{ and } M \cap \kappa =  \alpha    \}
\end{equation}
Define the map $F^{\Gamma}$ with domain $\kappa$ by $\alpha \mapsto A^{\Gamma}_\alpha$.
\end{definition}

\begin{lemma}\label{lem_ConstructLaver}
Suppose $\Gamma$ is a $< ORD$-stationary (resp. club) subclass of $\wp_\kappa(V)$, and let $F^{\Gamma}$ be as in Definition \ref{def_AttemptLaverMinus}.  Then:  
\begin{enumerate}
 \item\label{item_SatisfiesGuessing} $F^{\Gamma}$ satisfies Requirement \ref{item_Lavminus_guessing} in the Definition \ref{def_LavMinus} of a $\lavminus{\Gamma}$ function.%\footnote{If we do not require $\Gamma$ to be downward projecting, the function $F^{\Gamma}$ would still almost satisfy Requirement \ref{item_Lavminus_guessing} of Definition \ref{def_LavMinus}:   $G^{b}_{\in F^{\Gamma}}$ would still be a proper class, but might have some gaps. }
 \item\label{item_SetCatchesAll} There is some $\theta$ such that $F^{\Gamma} = F^{\Gamma \cap \wp_\kappa(H_\theta)}$
 \item\label{item_Short} Suppose $\text{wCC}^*(\Gamma)$ fails.  Then $F^{\Gamma}$ also satisfies requirement \ref{item_Lavminus_small} of Definition \ref{def_LavMinus}, and thus $F^{\Gamma}$ is a $\lavminus{\Gamma}$ (resp. $\lavplus{\Gamma}$) function.
\end{enumerate}
\end{lemma}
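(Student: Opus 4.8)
The plan is to verify the three items in turn; items (1) and (2) are direct from the definitions, while item (3) is the technical heart, which I would prove by contraposition using the guessing property from item (1).

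For item (1): fix a set $b$ and a regular $\theta$ large enough that $b \in H_\theta$ and that $\Gamma \cap \wp_\kappa(H_\theta)$ is stationary (resp.\ contains a club)---both hold for all large $\theta$ since $\Gamma$ is $<\!ORD$-stationary (resp.\ $<\!ORD$-club). Intersecting with the club $\{M : M \prec (H_\theta,\in,b)\}$ gives a stationary set (resp.\ a club) of $M \in \Gamma$ with $b \in M \prec (H_\theta,\in)$, and each such $M$ has $M \cap \kappa \in \kappa$. Writing $\alpha := M \cap \kappa$, the collapse $\sigma_M^{-1}(b)$ is an element of $H_M$, and since $M$ is one of the models in the union~(\ref{eq_A_alpha}) defining $A^\Gamma_\alpha$, we get $\sigma_M^{-1}(b) \in H_M \subseteq A^\Gamma_\alpha = F^\Gamma(M\cap\kappa)$, i.e.\ $M \in G^{b,\theta}_{\in F^\Gamma}$. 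Hence $\Gamma \cap G^{b,\theta}_{\in F^\Gamma}$ contains a stationary set (resp.\ a club), which is Requirement~\ref{item_Lavminus_guessing} in its two respective forms. For item (2): each $H_M$ (for $M \in \Gamma$) is a transitive set of size $<\kappa$, hence an element of $H_\kappa$, so $\{H_M : M \in \Gamma\}$ is a set of size $\le 2^{<\kappa}$; for each $\alpha < \kappa$ and each transitive $t$ of the form $H_M$ with $M \in \Gamma$ and $M \cap \kappa = \alpha$, choose a single witness $M_{\alpha,t}$. There are at most $2^{<\kappa}$ witnesses altogether, so they all lie in a common $H_\theta$ for some regular $\theta$, and for this $\theta$ every value $A^\Gamma_\alpha$ is already realized using only models $\subseteq H_\theta$; thus $F^\Gamma = F^{\Gamma \cap \wp_\kappa(H_\theta)}$.

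For item (3) I would argue the contrapositive: if $F^\Gamma$ fails Requirement~\ref{item_Lavminus_small}, say $|A^\Gamma_{\alpha^*}| > |\alpha^*|$ for some $\alpha^* < \kappa$, then $\text{wCC}^*(\Gamma)$ holds. Since $\Gamma$ is $<\!ORD$-stationary by hypothesis, it remains to produce stationarily many $\beta < \kappa$ with $\bigl| \bigcup \{ H_X : X \in \Gamma,\ X \prec_{\Sigma_1}(V,\in),\ X \cap \kappa = \beta \} \bigr| \ge \beta^+$. The idea is to package the largeness of $A^\Gamma_{\alpha^*}$ into a parameter $b$ consisting of $\alpha^*$ together with a suitable injection $h$ whose domain is a cardinal witnessing $|A^\Gamma_{\alpha^*}| > |\alpha^*|$, and feed $b$ into the guessing property from item (1), with $\theta$ chosen so large that $H_\theta \prec_{\Sigma_1} V$. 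This yields stationarily many $M \in \Gamma$ with $b \in M \prec (H_\theta,\in)$---hence $M \prec_{\Sigma_1}(V,\in)$---and $\sigma_M^{-1}(b) \in A^\Gamma_{M\cap\kappa}$; transitivity of $A^\Gamma_{M\cap\kappa}$ then forces $\sigma_M^{-1}(h)$, and with it the transitive set $\sigma_M^{-1}[A^\Gamma_{\alpha^*} \cap M]$, into both $H_M$ and $A^\Gamma_{M\cap\kappa}$, with size equal to $|M \cap \text{dom}(h)|$. Projecting the stationary set of such $M$ along $M \mapsto M \cap \kappa$ gives stationarily many candidate $\beta$, each with $\bigcup \{ H_X : X \in \Gamma,\ X \prec_{\Sigma_1} V,\ X \cap \kappa = \beta \}$ correspondingly large.

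The main obstacle is precisely this last "correspondingly large" step: one must arrange that $|M \cap \text{dom}(h)|$ actually reaches $(M \cap \kappa)^+$, not merely $|\alpha^*|^+$, for stationarily many $M$. I would handle this by additionally requiring that an appropriate closing-off cardinal be a subset of $M$, and by splitting into the cases $|A^\Gamma_{\alpha^*}| < \kappa$ and $|A^\Gamma_{\alpha^*}| \ge \kappa$; in the latter case the union~(\ref{eq_A_alpha}) must already consist of at least $\kappa$ many distinct collapses (otherwise, by regularity of $\kappa$, it would have size $< \kappa$), which feeds back into the count. Once item (3) is established, combining it with items (1) and (2) shows that, when $\text{wCC}^*(\Gamma)$ fails, $F^\Gamma$ satisfies both Requirements~\ref{item_Lavminus_guessing} and \ref{item_Lavminus_small}, and hence is a $\lavminus{\Gamma}$ function---and a $\lavplus{\Gamma}$ function when $\Gamma$ is $<\!ORD$-club.
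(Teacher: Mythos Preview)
Your arguments for items (1) and (2) are correct and essentially match the paper's (you give more detail for (2), where the paper just invokes a class pigeonhole argument).

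For item (3), however, you take a significant detour. The paper's proof is essentially one line: since $\Gamma$ is assumed $<\!ORD$-stationary, the failure of $\text{wCC}^*(\Gamma)$ means exactly that clause (b) of Definition~\ref{def_SmallBelowKappa} fails, i.e.\ for club-many $\alpha<\kappa$ the union $\bigcup\{H_X : X\in\Gamma,\ X\prec_{\Sigma_1}(V,\in),\ X\cap\kappa=\alpha\}$ has size $\le|\alpha|$. This is, up to the $\Sigma_1$ clause, literally the statement $|F^\Gamma(\alpha)|\le|\alpha|$ for almost every $\alpha$; the $\Sigma_1$ clause is harmless because every $M\prec(H_\theta,\in)$ appearing in item~(1) is automatically $\Sigma_1$-elementary in $V$, so restricting the union in~(\ref{eq_A_alpha}) to such $M$ would not affect item~(1). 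After a trivial modification on a nonstationary set, $F^\Gamma$ satisfies Requirement~\ref{item_Lavminus_small}. No reflection, guessing, or case analysis is needed.

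Your contrapositive attempts something strictly stronger: to derive $\text{wCC}^*(\Gamma)$---which demands \emph{stationarily many} bad $\alpha$---from a \emph{single} $\alpha^*$ with $|A^\Gamma_{\alpha^*}|>|\alpha^*|$. The ``main obstacle'' you flag is genuine and your proposed fix does not resolve it. In the case $|A^\Gamma_{\alpha^*}|<\kappa$, for instance, setting $\mu:=|A^\Gamma_{\alpha^*}|$ and requiring $\mu\subseteq M$ forces $\beta:=M\cap\kappa\ge\mu$; but then you would need to exhibit something of size $\ge\beta^+\ge\mu^+$ inside (or coded by) $\sigma_M^{-1}[A^\Gamma_{\alpha^*}\cap M]$, which has size at most $\mu$. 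More generally, reflecting the largeness of a single $A^\Gamma_{\alpha^*}$ into $M$ can only contribute a set of size $<\kappa$ to $H_M$, and there is no mechanism here to push that up to $(M\cap\kappa)^+$ once $M\cap\kappa$ is allowed to grow past $|\alpha^*|$. The right move is simply to read Requirement~\ref{item_Lavminus_small} off the negation of $\text{wCC}^*(\Gamma)$ directly.
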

\begin{proof}
Consider any set $b$ and any structure $\mathfrak{A} = (H_\theta, \in, \{ b \}, \dots)$ in a countable language.  Let $S:= \{  M \in \Gamma \ | \ M \prec \mathfrak{A} \}$.  Then for every $M \in S$:\footnote{Note $S$ is stationary or contains a club, depending on whether we assume $\Gamma$ is $<ORD$ stationary or $<ORD$ club.}  
\begin{equation*}
\sigma_M^{-1}(b) \in H_M \subseteq A^{\Gamma}_{M \cap \kappa} = F^{\Gamma}(M \cap \kappa) 
\end{equation*}

Part \ref{item_SetCatchesAll} just follows from the class Pigeonhole Principle and the definition of $A^{\Gamma}_\alpha$ in (\ref{eq_A_alpha}).

Finally, suppose $\text{wCC}^*(\Gamma)$ fails.  Then $|F^{\Gamma}(\alpha)| < \alpha^+$ for almost every $\alpha < \kappa$.  Combined with item (\ref{item_SatisfiesGuessing}) of the current lemma, this implies that $F^{\Gamma}$ is a $\lavminus{\Gamma}$ function (in the case that $\Gamma$ was $<ORD$ stationary) or a $\lavplus{\Gamma}$ function (in the case that $\Gamma$ was $<ORD$-club).   
\end{proof}

\begin{corollary}\label{cor_SmallnessImpliesLaver}
Suppose $\kappa$ is regular uncountable and $\Gamma$ is a $< ORD$ stationary (resp. club) subclass of $\wp_\kappa(V)$ such that $\text{wCC}^*(\Gamma)$ fails.  Then $\lav{\Gamma}$ (resp. $\lavplus{\Gamma}$) holds.
\end{corollary}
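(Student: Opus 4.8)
The plan is to assemble the corollary from two results already in hand: Lemma \ref{lem_ConstructLaver} of this section and Theorem \ref{thm_GeneralizeKunenDiamondMinus} of the previous one. No new construction is required. First I would observe that the hypotheses of the corollary — that $\Gamma$ is a $<ORD$ stationary (resp. club) subclass of $\wp_\kappa(V)$ and that $\text{wCC}^*(\Gamma)$ fails — are precisely the hypotheses under which part (\ref{item_Short}) of Lemma \ref{lem_ConstructLaver} applies. That lemma then hands us, with no further work, the conclusion that the canonical map $F^\Gamma$ of Definition \ref{def_AttemptLaverMinus} is a $\lavminus{\Gamma}$ function in the stationary case and a $\lavplus{\Gamma}$ function in the club case. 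In the club case this \emph{is} the asserted conclusion $\lavplus{\Gamma}$, so that half of the corollary is finished immediately.

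For the stationary case, the remaining step is to promote the $\lavminus{\Gamma}$ function $F^\Gamma$ to a genuine $\lav{\Gamma}$ function, and this is exactly the content of Theorem \ref{thm_GeneralizeKunenDiamondMinus} (the generalization of Kunen's $\Diamond^- \Rightarrow \Diamond$ argument). Its hypotheses are that $\kappa$ is regular uncountable (given), that $M \cap \kappa \in \kappa$ for every $M \in \Gamma$ (automatic since $\Gamma \subseteq \wp_\kappa(V)$), and that $\Gamma$ projects downward in the sense of Definition \ref{def_DownwardProjecting}. Granting that last property — which, as remarked after Definition \ref{def_DownwardProjecting}, holds for all the classes to which this corollary is in practice applied — the theorem gives $\lavminus{\Gamma} \iff \lav{\Gamma}$, and we conclude $\lav{\Gamma}$.

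I do not anticipate a real obstacle here, since every piece of mathematical substance was already carried out in Lemma \ref{lem_ConstructLaver} and Theorem \ref{thm_GeneralizeKunenDiamondMinus}; the ``proof'' is a matter of checking that the hypotheses line up. The one point deserving a sentence of care is the downward-projection requirement needed to invoke Theorem \ref{thm_GeneralizeKunenDiamondMinus} in the stationary case: it is not listed explicitly in the statement of the corollary, so one should either fold it into the hypotheses or note that it is a standing assumption on the classes $\Gamma$ considered in this section. One also checks, using Lemma \ref{lem_ConstructLaver}(\ref{item_SetCatchesAll}) together with the fact that $|F^\Gamma(\alpha)| \le |\alpha|$ once $\text{wCC}^*(\Gamma)$ fails, that $F^\Gamma$ may harmlessly be regarded as a function $\kappa \to H_\kappa$, as the definitions of the $\lav{}$-principles implicitly want (cf. Remark \ref{remark_ReplaceFunction}).
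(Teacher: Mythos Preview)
Your approach is exactly the paper's: it too simply cites Lemma \ref{lem_ConstructLaver} and Theorem \ref{thm_GeneralizeKunenDiamondMinus}, and you have even gone further by correctly flagging that the downward-projection hypothesis required by the latter is not explicit in the corollary's statement. One small slip: the condition $M \cap \kappa \in \kappa$ is \emph{not} automatic from $\Gamma \subseteq \wp_\kappa(V)$ alone (an arbitrary set of size $<\kappa$ need not meet $\kappa$ in an ordinal), so this should be handled the same way you handle downward projection---as an implicit standing assumption on the classes $\Gamma$ under consideration.
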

\begin{proof}
This follows immediately from Lemma \ref{lem_ConstructLaver} and Theorem \ref{thm_GeneralizeKunenDiamondMinus}.
\end{proof}

The following remark shows that the converse of Corollary \ref{cor_SmallnessImpliesLaver} is false (however, Question \ref{q_Failure_wCC_responsible} asks whether a natural variation of the converse must hold).
\begin{remark}\label{rem_ConverseFails}
The converse of Corollary \ref{cor_SmallnessImpliesLaver} is false; i.e. it is possible for $\lav{\Gamma}$ to and $\text{wCC}(\Gamma)$ to simultaneously hold.  Suppose $\kappa$ is a supercompact cardinal, $\Gamma = \wp_\kappa(V)$, and $F:\kappa \to V_\kappa$ is a (classical) supercompact Laver function.  Then $\text{wCC}(\Gamma)$ holds.  

It is also possible to obtain a counterexample to the converse of Corollary \ref{cor_Failure_wCC_implies_Laver} where $\kappa$ is a successor cardinal.   Suppose $\kappa$ is supercompact and $F:\kappa \to V_\kappa$ is a (classic) Laver function; suppose also that $\kappa$ is almost huge.   Let $\mathbb{P}$ be any $\kappa$-cc forcing which is a subset of  $V_\kappa$ and turns $\kappa$ into a successor cardinal, and let $G$ be $(V,\mathbb{P})$-generic.  It is easy to see that the function $\tilde{F}:\kappa \to H_{\kappa}^{V[G]}$ defined by
\begin{equation*}
\alpha \mapsto \Big(F(\alpha)\Big)_{G \cap V_\alpha}
\end{equation*}
is a $\lav{\kappa}$ function in $V[G]$.\footnote{In fact $\tilde{F}$ is a $\lav{\Gamma}$ function for any class $\Gamma$ which has the property that for every $\theta$ there is a (possibly illfounded) generic embedding $i: V[G] \to W$ such that $i[H_\theta^{V[G]}] \in i(\Gamma)$.}  But the almost hugeness of $\kappa$ in $V$ implies that, in $V[G]$, the class $\wp_\kappa(V[G])$ has the $\text{wCC}$ property.
\end{remark}

Abstract Condensation principles have been extensively studied, for example by Law~\cite{MR2691107}, Woodin~\cite{MR1713438}, and Friedman-Holy~\cite{MR2860182}.  Friedman and Holy considered several versions of Condensation, and proved that Stationary Condensation (even stronger versions called Local Club Condensation) are consistent with $\kappa$ being a very large cardinal.  This is to be contrasted with the severe restraints that Club Condensation (as in \cite{MR1713438}) place on the large cardinal properties of $\kappa$.
\begin{definition}
Suppose $\kappa$ is a regular uncountable cardinal.  \emph{Stationary (resp. Club)  Condensation holds at $\kappa$} iff there exists an $\in$-increasing and $\subseteq$-continuous sequence $\langle M_\eta \ | \ \eta < \kappa \rangle$ of transitive sets such that,  letting
\begin{equation}\label{eq_CondenseTo}
\Gamma_{ \vec{M}}:= \{  X \in \wp_\kappa(V) \ | \ (\exists \eta < \kappa ) ( H_X = M_\eta )  \}
\end{equation}
then for every regular $\theta \ge \kappa$, the set $\Gamma_{ \vec{M}} \cap \wp_\kappa(H_\theta)$ is stationary (resp. Club).
\end{definition}
They also proved:
\begin{theorem}\label{thm_FriedmanHoly}[Friedman-Holy~\cite{MR2860182}]
Suppose $\kappa$ is regular.  Then there is a $\kappa$-directed closed class forcing extension which satisfies ZFC and Stationary Condensation at $\kappa$ (and all larger cardinals).
\end{theorem}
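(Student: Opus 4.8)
The statement is due to Friedman-Holy \cite{MR2860182}, so in the paper itself the right move is simply to cite it; but here is the shape of the argument one would reconstruct. First I would recall their forcing $\mathbb{P}$ to add a \emph{global} Local Club Condensation structure above $\kappa$. A condition is a set-sized, coherent approximation to a sequence $\vec{F}=\langle F_\alpha \mid \kappa\le\alpha\in\mathrm{ORD}\rangle$ in which each $F_\alpha$ is an $\in$-increasing, $\subseteq$-continuous filtration of $H_\alpha$ into pieces of size $<|\alpha|$, subject to the \emph{condensation requirement} that the transitive collapse of any piece $F_\alpha(\xi)$ reappears in the structure as $F_{\bar\alpha}(\bar\xi)$, with $\bar\alpha,\bar\xi$ the collapsed indices; concretely a condition gives such an $\vec{F}$ on a bounded interval $[\kappa,\delta]$, and conditions are ordered by end-extension. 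From a generic $\vec{F}$ one then reads off, for each regular $\lambda\ge\kappa$, an $\in$-increasing $\subseteq$-continuous sequence $\langle M_\eta \mid \eta<\lambda\rangle$ of transitive sets by collecting the transitive collapses occurring along the $F_\theta$'s for $\theta\ge\lambda$, and the plan is to show this sequence witnesses Stationary Condensation at $\lambda$.

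The easy half is $\kappa$-directed closure: given a $<\kappa$-directed $D\subseteq\mathbb{P}$, its union $\bigcup D$ has domain $[\kappa,\sup_{p\in D}\delta_p]$, is coherent by directedness, and satisfies the continuity, filtration, and condensation requirements because each of these is preserved under increasing unions of coherent families; hence $\bigcup D$ (with the continuous union added at a new limit level if needed) is again a condition and bounds $D$. Preservation of ZFC and of cardinals is the technical core. One shows $\mathbb{P}$ factors, for every ordinal $\delta$, as a set forcing $\mathbb{P}_{\le\delta}$ followed by a highly closed tail $\mathbb{P}_{>\delta}$; this tameness yields the forcing theorem and preservation of ZFC for the class forcing $\mathbb{P}$. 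Cardinals $\le\kappa$ survive by $\kappa$-closure; for regular $\lambda>\kappa$ one checks that $\mathbb{P}_{\le\lambda}$ satisfies a chain condition (exploiting the very condensation structure it builds) that preserves $\lambda^+$, while $\mathbb{P}_{>\lambda}$ is $\lambda$-closed, so together they preserve $\lambda$ and $\lambda^+$ and no cardinal is collapsed.

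It then remains to see that, in $V[G]$, the sequence $\langle M_\eta \mid \eta<\lambda\rangle$ really makes $\Gamma_{\vec{M}}\cap\wp_\lambda(H_\theta)$ stationary for every regular $\theta\ge\lambda$. Fix such a $\theta$ and an algebra $\mathfrak{A}$ on $H_\theta^{V[G]}$. By a density argument back in $V$: below any condition $p$ one forms a Skolem hull of a $\mathbb{P}$-name for $\mathfrak{A}$ --- legitimately at set level, because the quotient forcing above $p$ is closed enough to ``capture'' the relevant names --- and extends $p$ to a condition $q$ one of whose new $F_\theta$-levels is forced to collapse to $H_X$ for some $X\prec\mathfrak{A}$ with $X\cap\lambda\in\lambda$; iterating this cofinally below $\lambda$ produces club-many, hence stationarily many, good $\eta$. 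So $\Gamma_{\vec{M}}\cap\wp_\lambda(H_\theta)$ in fact contains a club, which is even stronger than Stationary Condensation requires.

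The main obstacle is the circularity latent in the condensation requirement: it speaks of transitive collapses of elementary submodels of $H_\theta^{V[G]}$, yet $V[G]$ depends on the entire generic, so the requirement must be formulated purely internally --- in terms of the condition's own coherent structure, equivalently in terms of $\mathbb{P}$-names --- and one must then verify, using the factoring of $\mathbb{P}$ and the closure of its tails, that these internally computed collapses genuinely become collapses of elementary submodels of the final model $V[G]$. Carrying out that verification, together with pinning down the precise chain condition needed for cardinal preservation, is exactly the content of \cite{MR2860182}; granting those lemmas, the conclusion follows as sketched.
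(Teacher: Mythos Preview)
You correctly recognize that the paper does not prove this theorem: it is attributed to Friedman--Holy and simply cited. The paper's entire ``proof'' consists of a one-sentence description of the forcing: it \emph{first forces GCH above $\kappa$ and then performs a reverse Easton iteration of adding Cohen subsets of cardinal successors above $\kappa$}. So your instinct to cite is exactly right.

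Your reconstruction, however, describes a different-looking poset than the one the paper names. You present a single class forcing whose conditions are bounded, coherent end-segments of the global condensation sequence, ordered by end-extension. The paper instead points to a two-step construction: force GCH above $\kappa$, then do a reverse Easton (i.e., Easton-support) iteration in which at each successor cardinal one adds a Cohen subset. These two presentations are not obviously the same forcing, and the analysis differs accordingly: the reverse Easton approach lets one use standard Easton-support machinery (factoring at each stage into a small-chain-condition head and a highly closed tail) to get cardinal preservation and ZFC, whereas your end-extension poset requires you to prove these directly, which is the ``circularity'' you yourself flag at the end. Your sketch is a plausible outline of \emph{a} proof, and indeed Friedman--Holy's argument has this flavor once unwound, but if the goal is to match the paper, the correct answer is simply: the paper cites \cite{MR2860182} and describes the forcing as GCH-above-$\kappa$ followed by a reverse Easton iteration of Cohen forcing at successors.
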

The forcing first forces GCH above $\kappa$ and then performs a reverse Easton iteration of adding Cohen subsets of cardinal successors above $\kappa$.

\begin{remark}
All of the results of this section actually hold using a weaker, non-linear form of condensation.  In particular, for the Stationary Condensation theorems about a successor cardinal $\kappa$, all we need is some stationary $\Gamma \subseteq \wp_\kappa(V)$ such that for most $\alpha < \kappa$, the map 
\begin{equation*}
s^\Gamma_\alpha:  \{ X \in \Gamma \ | \ X \cap \kappa = \alpha \} \to H_\kappa
\end{equation*}
defined by
\begin{equation*}
X \mapsto H_X
\end{equation*}
is at most $|\alpha|$-to-one.
\end{remark}

\begin{lemma}\label{lem_wCC_wCCstar_equiv}
Suppose $\vec{M} = \langle M_\eta \ | \ \eta < \kappa \rangle$ is an $\in$-increasing, $\subseteq$-continuous sequence of (transitive) sets.  Let $T \subseteq \Gamma_{ \vec{M}}$, where $\Gamma_{ \vec{M}}$ is defined as in (\ref{eq_CondenseTo}).  Then:
\begin{equation}\label{eq_Short_equiv_Small}
\text{wCC}(T) \iff  \text{wCC}^*(T)
\end{equation}
\end{lemma}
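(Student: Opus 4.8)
\emph{Overview and the forward implication.} The forward direction $\text{wCC}(T)\Rightarrow\text{wCC}^*(T)$ needs nothing about the special form of $T$ and is precisely Observation~\ref{obs_SmallnessImpliesShortness}: for every $X\prec_{\Sigma_1}(V,\in)$ the ordinal $\text{otp}(X\cap ORD)$ is the height of the transitive set $H_X$, hence is a subset of $H_X$, so the union appearing in the definition of $\text{wCC}^*$ has cardinality at least the supremum appearing in the definition of $\text{wCC}$; and both principles include the clause ``$T$ is $<ORD$-stationary''. So all the work is in $\text{wCC}^*(T)\Rightarrow\text{wCC}(T)$, and the one resource that distinguishes $T\subseteq\Gamma_{\vec{M}}$ from an arbitrary class is the \emph{linearity} of $\vec{M}$.

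\emph{The linearity reduction.} Assume $\text{wCC}^*(T)$; the $<ORD$-stationarity clause of $\text{wCC}(T)$ is already given, so I must produce stationarily many $\alpha$ satisfying clause~(b) of Definition~\ref{def_SmallBelowKappa}. Since $\vec{M}$ is $\in$-increasing and $\subseteq$-continuous, the $M_\eta$ are $\subsetneq$-increasing transitive sets, the heights $\text{ht}(M_\eta)$ are strictly increasing and continuous (so $\text{ht}(M_\eta)\ge\eta$), and $\text{ht}\big(\bigcup_{\eta<\kappa}M_\eta\big)\ge\kappa$. Fix any $\alpha$ with $|N_\alpha|\ge\alpha^+$, where $N_\alpha:=\bigcup\{H_X \mid X\in T,\ X\prec_{\Sigma_1}(V,\in),\ X\cap\kappa=\alpha\}$. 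Each $H_X$ in this union is some $M_\eta$, so the union is a $\subseteq$-chain of members of $\vec{M}$; hence, using $\subseteq$-continuity at limits, $N_\alpha=M_{\bar\eta}$ where $\bar\eta$ is the supremum of the indices occurring, and since the height of a union of a chain of transitive sets is the supremum of the heights, $\text{ht}(N_\alpha)=\sup\{\text{otp}(X\cap ORD)\mid X\in T,\ X\prec_{\Sigma_1}(V,\in),\ X\cap\kappa=\alpha\}$. Thus it suffices to show $\text{ht}(N_\alpha)\ge\alpha^+$ for every such $\alpha$: then clause~(b) holds at every $\alpha$ witnessing $\text{wCC}^*(T)$, and there are stationarily many of those.

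\emph{The contradiction argument, and the crux.} Suppose instead $\text{ht}(N_\alpha)<\alpha^+$. Since $\text{ht}(M_{\bar\eta})\ge\bar\eta$, this forces $\bar\eta<\alpha^+$, so $N_\alpha$ is a union of at most $|\alpha|$ of the $M_\eta$'s; as $|\alpha|<\alpha^+\le|N_\alpha|$, some member has size $\ge\alpha^+$, and since the indices arising from $T$-structures are cofinal in $\bar\eta$ we may take that member to be $H_{X^*}$ for some $X^*\in T$ with $X^*\prec_{\Sigma_1}(V,\in)$ and $X^*\cap\kappa=\alpha$. Then $|X^*|=|H_{X^*}|\ge\alpha^+$, while, because $\sigma_{X^*}$ is the identity on $\alpha$, $\alpha\le\text{ht}(H_{X^*})\le\text{ht}(N_\alpha)<\alpha^+$, so $|X^*\cap ORD|=|\text{otp}(X^*\cap ORD)|=|\alpha|$. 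The crux is to rule this out. Using $\text{AC}$: for each $b\in H_{X^*}$ the $\Sigma_1$ statement with parameter $b$ asserting ``$b$ is in bijection with an ordinal'' holds in $V$, so — transporting along $\sigma_{X^*}$ and using that $H_{X^*}$ is transitive — there is a bijection $h_b\in H_{X^*}$ from some ordinal $\lambda_b<\text{ht}(H_{X^*})$ onto $\text{trcl}(\{b\})$; the relation $E_b=\{(\xi,\zeta)\mid h_b(\xi)\in h_b(\zeta)\}$ and the point $\xi_b=h_b^{-1}(b)$ again lie in $H_{X^*}$, and $b$ is recovered from $(\lambda_b,E_b,\xi_b)$ by Mostowski collapse, so $b\mapsto(\lambda_b,E_b,\xi_b)$ embeds $H_{X^*}$ injectively into $\text{ht}(H_{X^*})\times\big(H_{X^*}\cap\wp(\text{ht}(H_{X^*})\times\text{ht}(H_{X^*}))\big)\times\text{ht}(H_{X^*})$. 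Iterating this coding on the middle coordinate, and invoking the $<ORD$-stationarity of $\Gamma_{\vec{M}}$ — which forces those members $M_\eta$ of $\vec{M}$ that arise as transitive collapses of elementary substructures of a well-ordered $(H_\theta,\in,\Delta)$, and hence cofinally many of them, to satisfy $|M_\eta|=|\text{ht}(M_\eta)|$ — pins $|H_{X^*}|\le|\text{ht}(H_{X^*})|+\aleph_0=|\alpha|<\alpha^+$, contradicting $|X^*|\ge\alpha^+$.

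\emph{Where the difficulty lies.} I expect this last step to be the genuine obstacle: a merely $\Sigma_1$-elementary $X^*$ can in general be ``fat'' relative to its ordinals (e.g.\ a $\Sigma_1$-Skolem hull of uncountably many reals), so the coding argument alone does not close the gap — it must really exploit that $H_{X^*}$ is one of the $M_\eta$ and that $\Gamma_{\vec{M}}$ is $<ORD$-stationary, i.e.\ it must leverage the comparison with full elementary substructures of well-ordered structures. Making that interaction precise — rather than simply appealing to full elementarity, which the definitions do not assume — is the delicate part; the linearity reduction and the surrounding cardinal arithmetic are routine.
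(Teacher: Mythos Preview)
Your crux step has a genuine gap, as you yourself suspect. The coding argument you sketch injects $H_{X^*}$ into triples whose middle coordinate is a binary relation on $\text{height}(H_{X^*})$ lying in $H_{X^*}$; ``iterating'' this does not terminate, since bounding the number of such relations in $H_{X^*}$ is the very problem you started with. Your invocation of the $<ORD$-stationarity of $\Gamma_{\vec M}$ does not help either: even if it produces cofinally many $\eta$ with $|M_\eta|=|\text{height}(M_\eta)|$, nothing forces the particular $M_{\eta^*}=H_{X^*}$ to be among these, since $X^*$ was obtained by a pigeonhole argument from the assumption $|N_\alpha|\ge\alpha^+$, not by any elementarity or stationarity consideration. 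So the argument that a $\Sigma_1$-elementary $X^*$ with $X^*\cap\kappa=\alpha$ cannot have $|H_{X^*}|\ge\alpha^+$ while $\text{height}(H_{X^*})<\alpha^+$ is simply not completed.

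The paper's route is quite different and much shorter: it never passes through the height of the union $N_\alpha$ or any contradiction argument. Since $\vec M$ is $\in$-increasing, the map $\eta\mapsto\text{height}(M_\eta)$ is injective; hence $H_X\mapsto\text{otp}(X\cap ORD)$ is injective on $T$, and the paper argues directly that if the family $\{H_X:X\in T,\ X\cap\kappa=\alpha\}$ has cardinality $\ge\alpha^+$ then so does $\{\text{otp}(X\cap ORD):\dots\}$, whence the supremum is $\ge\alpha^+$. You should notice, though, that the paper's passage from the hypothesis of $\text{wCC}^*$ (which concerns $\big|\bigcup\{H_X:\dots\}\big|$) to ``the set $\{H_X:\dots\}$ has cardinality $\ge\alpha^+$'' is itself not explained there---it is exactly the step your crux was designed to supply. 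So you have correctly located the delicate point; your elaborate pigeonhole reduction is an attempt to justify precisely the leap the paper makes tacitly, but your resolution does not go through as written.
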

\begin{proof}
The $\Leftarrow$ direction of (\ref{eq_Short_equiv_Small}) is obvious, by Observation \ref{obs_SmallnessImpliesShortness}.  For the $\Rightarrow$ direction we show the contrapositive:  assume $\text{wCC}^*(T)$ holds, and consider a typical $\alpha < \kappa$ which witnesses this fact.   Since $\vec{M}$ is $\in$-increasing then the map $\eta \mapsto \text{height}(M_\eta)$ is one-to-one; it follows that for any pair $X,Y \in T$:
\begin{align*}
   \text{otp}(X \cap \theta) = \text{otp}(Y \cap \theta)   \implies H_X = H_Y
\end{align*}  
Thus since the set
\begin{equation*}
\{ H_X \ | \ X \in T \text{ and } X \cap \kappa = \alpha    \}
\end{equation*}
has cardinality $\ge \alpha^+$, then so does the set
\begin{equation*}
\{ \text{height}(H_X) \ | \ X \in T \text{ and } X \cap \kappa = \alpha    \}
\end{equation*}
which means that $\alpha$ witnesses that $\text{wCC}(T)$ holds.
\end{proof}

\begin{corollary}\label{cor_Failure_wCC_implies_Laver}
Assume Stationary Condensation (resp. Club Condensation) holds at $\kappa$, as witnessed by some $\vec{M} = \langle M_\eta \ | \ \eta < \kappa \rangle$.  Let $\Gamma \subseteq \wp_\kappa(V)$ be the class of structures that condense to $\vec{M}$.  Then:
\begin{equation*}
\neg \text{wCC}(\Gamma) \implies \lav{\Gamma} \text{   (resp. } \lavplus{\Gamma} \text{ )}
\end{equation*}
\end{corollary}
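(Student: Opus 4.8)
The plan is to simply chain together Lemma~\ref{lem_wCC_wCCstar_equiv} and Corollary~\ref{cor_SmallnessImpliesLaver}; no new idea is needed beyond what has already been set up in this section.

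First I would record that $\Gamma = \Gamma_{\vec{M}}$ is $<ORD$-stationary (resp.\ $<ORD$-club): this is immediate from the definition of Stationary (resp.\ Club) Condensation at $\kappa$, which asserts exactly that $\Gamma_{\vec{M}} \cap \wp_\kappa(H_\theta)$ is stationary (resp.\ contains a club) for every regular $\theta \ge \kappa$. If convenient one may further shrink $\Gamma$ to those $X$ with $X \cap \kappa \in \kappa$; since $\kappa$ is regular uncountable this is a club restriction, so it preserves $<ORD$-stationarity (resp.\ $<ORD$-clubness) while placing us inside the hypotheses of Definition~\ref{def_AttemptLaverMinus} and Theorem~\ref{thm_GeneralizeKunenDiamondMinus}.

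Next, assuming $\neg\,\text{wCC}(\Gamma)$, I would invoke Lemma~\ref{lem_wCC_wCCstar_equiv} with $T := \Gamma$. This is legitimate because $\vec{M}$ is $\in$-increasing and $\subseteq$-continuous and $\Gamma = \Gamma_{\vec{M}} \subseteq \Gamma_{\vec{M}}$, so the lemma gives $\text{wCC}(\Gamma) \iff \text{wCC}^*(\Gamma)$, whence $\neg\,\text{wCC}^*(\Gamma)$. Finally, Corollary~\ref{cor_SmallnessImpliesLaver}, applied to the $<ORD$-stationary (resp.\ $<ORD$-club) class $\Gamma$ for which $\text{wCC}^*(\Gamma)$ fails, yields $\lav{\Gamma}$ (resp.\ $\lavplus{\Gamma}$), which is the desired conclusion.

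There is essentially no obstacle: the statement is a formal consequence of the two cited results, and the only points requiring a moment's care are that Lemma~\ref{lem_wCC_wCCstar_equiv} is being applied in the degenerate case $T = \Gamma_{\vec{M}}$ itself, and that the ``$X \cap \kappa \in \kappa$'' side conditions of the earlier machinery are satisfied — both of which are immediate.
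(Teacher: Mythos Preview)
Your proposal is correct and matches the paper's own proof, which simply states that the corollary follows directly from Corollary~\ref{cor_SmallnessImpliesLaver} and Lemma~\ref{lem_wCC_wCCstar_equiv}. The extra verifications you include (that $\Gamma$ is $<ORD$-stationary/club and that the $X \cap \kappa \in \kappa$ hypothesis is met) are routine and implicit in the paper's one-line justification.
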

\begin{proof}
This follows directly from Corollary \ref{cor_SmallnessImpliesLaver} and Lemma \ref{lem_wCC_wCCstar_equiv}.
\end{proof}

\begin{lemma}\label{lem_StatCondenseSuccessorFail_CC}
If $\kappa$ is a \textbf{successor} cardinal and Stationary Condensation holds at $\kappa$ as witnessed by some $\vec{M} = \langle M_\eta \ | \ \eta < \kappa \rangle$, then $\text{wCC}^*(\Gamma_{\vec{M}})$ fails. 
\end{lemma}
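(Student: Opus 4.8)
Throughout write $\kappa=\mu^+$. Stationary Condensation at $\kappa$ is exactly the assertion that $\Gamma_{\vec M}$ is $<ORD$-stationary, so clause (a) in the definition of $\text{wCC}^*(\Gamma_{\vec M})$ holds; hence it suffices to refute clause (b), i.e.\ to produce a club $C\subseteq\kappa$ so that for every $\alpha\in C$,
\[
\Big|\,\bigcup\{H_X : X\in\Gamma_{\vec M},\ X\prec_{\Sigma_1}(V,\in),\ X\cap\kappa=\alpha\}\,\Big| < \alpha^+ .
\]
Since $\mu$ is a bounded, hence non-stationary, subset of the regular cardinal $\kappa=\mu^+$, it is enough to establish this for $\alpha$ with $\mu\le\alpha<\kappa$; for such $\alpha$ one has $|\alpha|=\mu$ and $\alpha^+=\mu^+=\kappa$.

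So fix $\alpha$ with $\mu\le\alpha<\kappa$, and let $X\in\Gamma_{\vec M}$ with $X\cap\kappa=\alpha$, say $H_X=M_\eta$. Because $\alpha\subseteq X$ and $|X|<\kappa=\mu^+$, we get $|M_\eta|=|H_X|=|X|=\mu$, so in particular $\mathrm{ht}(M_\eta)<\mu^+=\kappa$. As $\vec M$ is $\in$-increasing and $\subseteq$-continuous with each $M_\eta$ of size $<\kappa$, the map $h(\eta):=\mathrm{ht}(M_\eta)$ is a normal function from $\kappa$ to $\kappa$. Setting
\[
E_\alpha := \{\eta<\kappa : \exists X\in\Gamma_{\vec M}\ (\,X\prec_{\Sigma_1}(V,\in)\ \wedge\ X\cap\kappa=\alpha\ \wedge\ H_X=M_\eta\,)\},
\]
we have $\bigcup\{H_X : X\in\Gamma_{\vec M},\ X\prec_{\Sigma_1}(V,\in),\ X\cap\kappa=\alpha\}=\bigcup_{\eta\in E_\alpha}M_\eta$, so everything reduces to showing that $E_\alpha$ is bounded in $\kappa$ for club-many $\alpha$: then the union is contained in $M_{\sup E_\alpha}$, which has cardinality $<\kappa=\alpha^+$, and clause (b) fails at $\alpha$.

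The heart of the matter, and the step I expect to be the main obstacle, is thus the boundedness of $E_\alpha$; here the requirement $X\prec_{\Sigma_1}(V,\in)$ is indispensable. Indeed, \emph{dropping} elementarity, given any $\eta$ with $\mathrm{ht}(M_\eta)\ge\alpha$ one may take an $\in$-isomorphic copy of $M_\eta$ that is the identity on $\alpha$ and maps every ordinal of $M_\eta$ in $[\alpha,\mathrm{ht}(M_\eta))$ to an ordinal $\ge\kappa$; this produces $X$ with $X\cap\kappa=\alpha$ and $H_X=M_\eta$, so the non-elementary analogue of $E_\alpha$ is a tail of $\kappa$. With $X\prec_{\Sigma_1}(V,\in)$, however, the inverse collapse $\sigma_X:M_\eta\to(V,\in)$ is $\Sigma_1$-elementary (as $\Sigma_1$ formulas are preserved by $\in$-isomorphisms), and it has critical point exactly $\alpha$ — it is the identity on $\alpha$ yet sends each ordinal of $M_\eta$ in $[\alpha,\mathrm{ht}(M_\eta))$ above $\kappa$. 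The plan is to show that, for club-many $\alpha$ (e.g.\ fixed points of $h$), only boundedly many $\eta<\kappa$ admit such an embedding. Working with $X$ elementary in a structure over some large $H_\Theta$ naming $\vec M$, $\kappa$, and a well-order $\Delta$, one notes that for each $\zeta<\alpha$ the model $X$ contains $M_\zeta$ together with a surjection of $\alpha$ onto it (available since $|M_\zeta|<\kappa=|\alpha|^+$), whence $M_\zeta\subseteq X$, $\pi_X(M_\zeta)=M_\zeta$, and therefore $\pi_X(\vec M)=\vec M\restriction\alpha$; then $M_\eta=H_X$ recognizes $\vec M\restriction\alpha$ as a Stationary-Condensation-type sequence for the cardinal $\alpha=\pi_X(\kappa)$, and through $\pi_X(\Delta)$ it recognizes itself as a transitive collapse of an elementary submodel of $(H_\Theta,\in,\Delta)$ — which, by the injectivity of $\eta\mapsto\mathrm{ht}(M_\eta)$, pins the possible $\eta$ into a bounded set. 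Finally one checks that an arbitrary $\Sigma_1$-elementary $X$ with $H_X=M_\eta$ and $X\cap\kappa=\alpha$ imposes, via $\sigma_X$, the same constraint on $M_\eta$, so the full $E_\alpha$ is bounded as well. Making the "bounded set of $\eta$'s" step precise under bare Stationary Condensation (rather than the cleaner Club Condensation), and verifying that the $\Sigma_1$-elementary version genuinely reduces to the fully-elementary one, is where the real work lies.
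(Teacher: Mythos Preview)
Your reduction is correct and matches the paper's: it suffices to show, for each $\alpha\in(\mu,\kappa)$, that the set $E_\alpha$ of indices $\eta$ arising as $H_X=M_\eta$ for some $X\prec_{\Sigma_1}(V,\in)$ with $X\cap\kappa=\alpha$ is bounded in $\kappa$. But you miss the one-line key observation and instead embark on an elaborate and admittedly incomplete ``self-recognition'' argument via a well-order $\Delta$.

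The paper's insight is this: ``being a cardinal'' is a $\Pi_1$ property, so if $X\prec_{\Sigma_1}(V,\in)$ and $X\cap\kappa=\alpha$, then $H_X\models\alpha\in\text{CARD}$ (since $\sigma_X(\alpha)=\kappa$). On the other hand, $\alpha\in(\mu,\mu^+)$ is \emph{not} a cardinal, so fixing a surjection $f:\mu\to\alpha$ and applying Stationary Condensation at $\theta=\kappa$ one finds some $X'\prec(H_\kappa,\in,\{f\})$ with $\alpha\subseteq X'$ and $H_{X'}=M_{\eta_\alpha}$; then $M_{\eta_\alpha}\models\alpha\notin\text{CARD}$. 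Since ``$\alpha$ is not a cardinal'' is $\Sigma_1$ and the $M_\eta$ are $\subseteq$-increasing, every $M_\eta$ with $\eta\ge\eta_\alpha$ also thinks $\alpha$ is not a cardinal. Hence $E_\alpha\subseteq\eta_\alpha$, so $\bigcup_{\eta\in E_\alpha}M_\eta\subseteq M_{\eta_\alpha}$, which has size $<\kappa=\alpha^+$. Done.

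Your proposed route is not only more complicated but has genuine gaps. It is unclear how the collapsed well-order $\pi_X(\Delta)$ would let $M_\eta$ pin down its own index in a way that bounds $E_\alpha$ uniformly; and your claimed reduction from the $\Sigma_1$-elementary case to the fully elementary case (where you can arrange $\vec M\restriction\alpha\in X$, etc.) is asserted but not argued --- a bare $\Sigma_1$-elementary $X$ need not contain $\vec M$ or $\Delta$ at all, so there is no obvious way to transfer those constraints.
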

\begin{proof}
Say $\kappa = \mu^+$.  Let $\alpha \in (\mu, \mu^+$); then there is some $\eta_\alpha < \kappa$ such that $M_{\eta_\alpha} \models \ \alpha \notin \text{CARD}$.\footnote{To see this:  let $f: \mu \to \alpha$ be surjective.  Let $X  \prec (H_\kappa, \in, \{ f \})$ with $\alpha \subset X$; then $X = H_X =  M_\eta$ for some $\eta$, and sees that $\alpha$ is not a cardinal.  }  If $X \prec_{\Sigma_1} (V,\in)$ and $\alpha = X \cap \kappa$ then $H_X \models \alpha \in \text{CARD}$.  It follows that
\begin{equation*}
R_\alpha:=\{  H_X \ | \  X \in \Gamma_{ \vec{M}} \text{ and } X \prec_{\Sigma_1} (V, \in) \text{ and }  X \cap \kappa = \alpha   \} \subset M_{\eta_\alpha}
\end{equation*}
and so
\begin{equation*}
|R_\alpha | \le | M_{\eta_\alpha}| < \kappa = \alpha^+
\end{equation*}
\end{proof}

\begin{corollary}
$L$ satisfies $\lavplus{\kappa}$ whenever $\kappa$ is a successor cardinal.
\end{corollary}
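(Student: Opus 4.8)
The plan is to combine the condensation machinery of Section~\ref{sec_CondensationChang} with G\"odel's Condensation Lemma. Work inside $L$ and fix a successor cardinal $\kappa$. Let $\vec{M} := \langle L_\eta \mid \eta < \kappa \rangle$; this is an $\in$-increasing, $\subseteq$-continuous sequence of transitive sets, and I take $\Gamma := \Gamma_{\vec{M}}$, the associated class of structures that condense to $\vec{M}$.

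The only step with genuine content is verifying that $\vec{M}$ witnesses Club Condensation at $\kappa$; equivalently, that $\Gamma$ is $< ORD$ club. Since every regular $\theta \ge \kappa$ is a cardinal of $L$, we have $H_\theta^L = L_\theta$, so it suffices to show that the collection of $X \in \wp_\kappa(L_\theta)$ with $X \prec (L_\theta, \in)$ and $X \cap \kappa \in \kappa$ is, modulo a club, contained in $\Gamma$. That these $X$ form a club in $\wp_\kappa(L_\theta)$ is the standard fact that, for regular $\kappa$, the elementary substructures $X$ with $X \cap \kappa \in \kappa$ are club in $\wp_\kappa$ of any structure --- obtained by iterating ``take the Skolem hull, then adjoin $\sup(X \cap \kappa)$'' $\omega$ many times, using that $\kappa$ has uncountable cofinality. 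And any such $X$ lies in $\Gamma$ by G\"odel's Condensation Lemma: the transitive collapse $H_X$ of $X \prec (L_\theta, \in)$ has the form $L_\beta$, and $|L_\beta| = |\beta| = |X| < \kappa$ forces $\beta < \kappa$, so $H_X = M_\beta$ as required.

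With Club Condensation at $\kappa$ established, everything downstream is an appeal to results already proved. Since $\kappa$ is a successor cardinal and $\vec{M}$ witnesses Stationary Condensation at $\kappa$ (every club is stationary), Lemma~\ref{lem_StatCondenseSuccessorFail_CC} yields that $\text{wCC}^*(\Gamma)$ fails. Then Corollary~\ref{cor_SmallnessImpliesLaver}, applied to the $< ORD$ club class $\Gamma$ for which $\text{wCC}^*(\Gamma)$ fails, produces a $\lavplus{\Gamma}$ function $F$. Finally I would note that $F$ is in fact a $\lavplus{\kappa}$ function: requirement~\ref{item_Lavminus_small} of Definition~\ref{def_LavMinus}, namely $|F(\alpha)| \le |\alpha|$, is already part of being a $\lavplus{\Gamma}$ function, and for any $b$ and sufficiently large regular $\theta$ the class $\Gamma$ contains a club in $\wp_\kappa(H_\theta)$, so $\wp_\kappa(H_\theta) \setminus \Gamma$ is nonstationary; combined with the fact that $(\Gamma \cap \wp_\kappa(H_\theta)) \setminus G^{b,\theta}_{\in F}$ is nonstationary, this shows $\wp_\kappa(H_\theta) \setminus G^{b,\theta}_{\in F}$ is nonstationary, which is exactly the guessing requirement for $\lavplus{\wp_\kappa(V)} = \lavplus{\kappa}$. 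The main obstacle is thus purely the Club Condensation verification in the second paragraph; the rest is bookkeeping on top of Lemma~\ref{lem_StatCondenseSuccessorFail_CC} and Corollary~\ref{cor_SmallnessImpliesLaver}.
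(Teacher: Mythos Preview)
Your proof is correct and follows essentially the same route as the paper, which simply cites Lemma~\ref{lem_StatCondenseSuccessorFail_CC} and Corollary~\ref{cor_Failure_wCC_implies_Laver} together with the remark that $L$ satisfies Club Condensation. You have filled in two details the paper leaves implicit---the verification via G\"odel condensation that $\vec{M} = \langle L_\eta \mid \eta < \kappa\rangle$ witnesses Club Condensation, and the passage from $\lavplus{\Gamma}$ to $\lavplus{\kappa}$ using that $\Gamma$ is $<ORD$ club---and you invoke Corollary~\ref{cor_SmallnessImpliesLaver} directly rather than through Corollary~\ref{cor_Failure_wCC_implies_Laver}, which is a harmless shortcut since Lemma~\ref{lem_StatCondenseSuccessorFail_CC} already delivers the failure of $\text{wCC}^*(\Gamma)$.
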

\begin{proof}
This follows from Lemma \ref{lem_StatCondenseSuccessorFail_CC} and Corollary  \ref{cor_Failure_wCC_implies_Laver}; note that $L$ satisfies Club Condensation.
\end{proof}

\section{Strengthening and simplification of main theorem from Viale~\cite{Viale_GuessingModel}}\label{sec_StrengthenViale}

The key ingredient of Viale's~\cite{Viale_GuessingModel} proof of that $PFA$ implies $\lav{\omega_2}$ is his ``Isomorphism Theorem" about a particular subclass of $\wp_{\omega_2}(V)$ which was shown by Viale-Wei\ss~\cite{VW_ISP} to be stationary in $\wp_{\omega_2}(H_\theta)$ for all $\theta$ (under the assumption of PFA).  This subclass of $\wp_{\omega_2}(V)$ is called the class of $\omega_1$-internally club, $\omega_1$-guessing models, and denoted $\text{GIC}_{\omega_1}$.  The stationarity of this class is responsible for much of the consistency strength and many of the consequences of PFA, and is widely conjectured to be equiconsistent with PFA.\footnote{Even without the ``internally club" part.}  We will not need to define $\text{GIC}_{\omega_1}$, but only use a few of its key properties.

For transitive $ZF^-$ models $H$ and $H'$, we say that \emph{$H$ is a hereditary initial segment of $H'$} iff either $H = H'$ or $H = (H_\lambda)^{H'}$ for some $\lambda \in \text{CARD}^{H'}$.  
\begin{definition}
A class \emph{$\Gamma$ has the $\kappa$-Isomorphism Property} iff whenever $X$, $X'$ are elements of $\Gamma$ and $X \cap \kappa = X' \cap \kappa$, then one of $H_X$, $H_{X'}$ is a hereditary initial segment of the other.
\end{definition}
The class $\text{GIC}_{\omega_1}$ easily projects downward (in the sense of Definition \ref{def_DownwardProjecting}),\footnote{See Lemma 10 of \cite{MR3096619}.} and Viale proved:\footnote{A simplified and elementary proof of the Isomorphism Theorem for $GIC_{\omega_1}$ can be found in Section 2.3 of Cox-Viale~\cite{MR3096619}.}
\begin{theorem}\label{thm_VialeIso}[Viale]
Assume $H$ and $H\rq{}$ are transitive $ZF^-$ models such that $H \cap H_{\omega_1} = H\rq{} \cap H_{\omega_1}$, and $H$, $H\rq{}$ are both in $GIC_{\omega_1}$.  Then one of $H$, $H\rq{}$ is a hereditary initial segment of the other.  
\end{theorem}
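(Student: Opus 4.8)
The plan is to work with the generators: fix $X,X'\in GIC_{\omega_1}$ with $H=H_X$ and $H'=H_{X'}$, so $\sigma_X\colon H\to X\prec(H_\theta,\in)$ and $\sigma_{X'}\colon H'\to X'\prec(H_{\theta'},\in)$ are the inverse Mostowski collapses. The two properties of $GIC_{\omega_1}$ I would use are $\omega_1$-guessing and $\omega_1$-internal clubness. From internal clubness one first extracts that $X$ is \emph{correct about countability}: every countable element of $X$ is a subset of $X$ (it sits inside a countable elementary submodel of the internally club filtration of $X$), hence admits a surjection from $\omega$ lying in $X$. This has two consequences. First, $(H_{\omega_1})^H=H\cap H_{\omega_1}$ (and similarly for $H'$), so the hypothesis $H\cap H_{\omega_1}=H'\cap H_{\omega_1}$ upgrades to $(H_{\omega_1})^H=(H_{\omega_1})^{H'}$, and in particular $\omega_1^H=\omega_1^{H'}$. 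Second, the $\omega_1$-guessing of $X$ can be recast as a purely structural principle about the transitive model $H$:
\begin{equation*}
(\star)\qquad \text{for all }\gamma\in H\cap\mathrm{ORD}\text{ and all }Z\subseteq\gamma,\ \ \big(\forall a\in H\cap[\gamma]^{\le\omega}\ \ Z\cap a\in H\big)\ \Longrightarrow\ Z\in H,
\end{equation*}
and likewise for $H'$. One proves $(\star)$ by noting that, via $\sigma_X$, the collapsed ``countable approximations'' $Z\cap a$ of $Z$ correspond exactly to the $X$-approximations of $\sigma_X[Z]$ demanded by $\omega_1$-guessing, applying guessing to $\sigma_X[Z]$, and collapsing the resulting guess.

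The heart of the proof is the assertion that $\mathcal{P}(\mu)\cap H=\mathcal{P}(\mu)\cap H'$ for every ordinal $\mu<\min(\mathrm{ORD}\cap H,\mathrm{ORD}\cap H')$, which I would establish by induction on $\mu$. Given $Z\in\mathcal{P}(\mu)\cap H$, to conclude $Z\in H'$ it suffices, by $(\star)$ applied to $H'$ with $\gamma=\mu$, to check $Z\cap a\in H'$ for every countable $a\in H'$ with $a\subseteq\mu$. If $a$ is bounded in $\mu$ this is immediate from the induction hypothesis, since $Z\cap a=\big(Z\cap(\sup a+1)\big)\cap a$ and $Z\cap(\sup a+1)\in\mathcal{P}(\sup a+1)\cap H=\mathcal{P}(\sup a+1)\cap H'$. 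The remaining, delicate case is when $a$ is cofinal in $\mu$ — equivalently $\mathrm{cf}^{H'}(\mu)=\omega$. Here I would exploit the internally club structure of $H'$ again: $a$ lies inside the image of some countable elementary submodel $X'_\xi\prec X'$ of the filtration of $X'$, whose genuine transitive collapse $\bar c$ is a \emph{countable transitive} set that belongs to $H'$ and hence, being hereditarily countable, also to $H$; along with $\bar c$ one has an order isomorphism $\tau$, an element of $H'$, carrying the countable ordinal $\bar c\cap\mathrm{ORD}$ onto a countable set of ordinals below $\mu$. Transporting $a$ and the computation of $Z\cap a$ through $\tau$ reduces the problem to deciding membership in $H$ of a subset of a countable ordinal, which is handled by the induction hypothesis at countable levels together with one further application of $(\star)$, this time to $H$, to place the needed collapse data into $H$.

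This cofinal step — making the two internally club filtrations cohere well enough to control countable cofinal approximations — is the one I expect to be the main obstacle, and it is precisely where membership in $GIC_{\omega_1}$ is used rather than mere $\omega_1$-guessing.

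Granting the claim, the theorem follows quickly. Without loss of generality $\mathrm{ORD}\cap H\le\mathrm{ORD}\cap H'$; put $\mu^*=\mathrm{ORD}\cap H$. If $\mu^*=\mathrm{ORD}\cap H'$, the claim gives $H=H'$. Otherwise, coding each $x\in H$ as a subset of an ordinal $<\mu^*$ (via a bijection between $\mathrm{tc}(\{x\})$ and the ordinal $|\mathrm{tc}(x)|^H$, which is $<\mu^*$ since $\mathrm{ORD}\cap H$ is closed under Gödel pairing) and decoding inside $H'$ — which models $ZF^-$ — shows $H\subseteq H'$; the symmetric coding of $H'$-small sets as subsets of ordinals $<\mu^*$ then shows $H=(H_{\mu^*})^{H'}$. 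Finally $(H_{\mu^*})^{H'}=(H_\lambda)^{H'}$ where $\lambda=\mu^*$ if $\mu^*\in\mathrm{CARD}^{H'}$ and $\lambda=\big(|\mu^*|^{H'}\big)^{+H'}$ otherwise; in either case $\lambda\in\mathrm{CARD}^{H'}$, so $H$ is a hereditary initial segment of $H'$.
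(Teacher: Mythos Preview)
The paper does not actually prove Theorem~\ref{thm_VialeIso}; it is quoted as Viale's result, and a footnote points the reader to Section~2.3 of Cox--Viale for an elementary proof.  So there is no in-paper argument to compare against.

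That said, your outline is essentially the Cox--Viale argument: reformulate $\omega_1$-guessing as the approximation principle $(\star)$ (using internal clubness to ensure countable elements of $X$ are subsets of $X$, so that the translation between $X$-approximations and $H$-approximations goes through), and then prove $\mathcal P(\mu)\cap H=\mathcal P(\mu)\cap H'$ by induction on $\mu$, with the bounded case immediate from the induction hypothesis.  You have correctly identified the countable-cofinality case as the crux and correctly located internal clubness as what resolves it.

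The one place your sketch is too loose is the sentence ``one further application of $(\star)$, this time to $H$, to place the needed collapse data into $H$.''  The map $\tau$ (equivalently, the set $N'\cap\mu$ of ordinals it enumerates) has large ordinals in its range and is \emph{not} hereditarily countable, so it does not pass from $H'$ to $H$ via the hypothesis $H\cap H_{\omega_1}=H'\cap H_{\omega_1}$; and a direct application of $(\star)$ to force $N'\cap\mu\in H$ runs into exactly the same unbounded-$a$ problem you are trying to solve.  The clean way out is to first observe (using $(\star)$ alone, applied to the range of an increasing cofinal $\omega$-sequence from $V$) that $\operatorname{cf}^H(\mu)=\omega\iff\operatorname{cf}^V(\mu)=\omega\iff\operatorname{cf}^{H'}(\mu)=\omega$, so one may fix a cofinal $\omega$-sequence $c\in H$; then the pieces $Z\cap c(n)$ lie in $H'$ by the induction hypothesis, and one finishes by showing that the coded sequence $\langle Z\cap c(n):n<\omega\rangle$ (equivalently $c$ itself) transfers to $H'$ --- which now reduces, via the $H$-side internal-club filtration, to moving a genuinely hereditarily countable object across.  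Your paragraph gestures at exactly this manoeuvre but does not quite close the loop; once you make explicit which object is hereditarily countable at the end, the argument is complete.
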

\begin{corollary}\label{cor_VialeIso}[Viale]
Suppose $\mathfrak{c} = \omega_2$ and $p: \omega_2 \leftrightarrow H_{\omega_1}$ is a bijection.  Then 
\begin{equation*}
\Gamma:= \bigcup_{\theta \ge \omega_2} \{   X \in \text{GIC}_{\omega_1} \ | \ X \prec (H_\theta, \in, p)           \}
\end{equation*}
satisfies the $\omega_2$-Isomorphism Property.
\end{corollary}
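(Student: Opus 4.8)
The plan is to obtain the $\omega_2$-Isomorphism Property for $\Gamma$ as an essentially immediate consequence of Viale's Isomorphism Theorem (Theorem \ref{thm_VialeIso}). I would fix $X, X' \in \Gamma$ with $X \cap \omega_2 = X' \cap \omega_2$, and for $Y \in \{X, X'\}$ write $\pi_Y = \sigma_Y^{-1} \colon Y \to H_Y$ for the transitive collapsing map. Since $X$ and $X'$ are elementary submodels of structures of the form $(H_\theta, \in, p)$, their collapses $H_X$ and $H_{X'}$ are transitive models of $ZF^-$; and since membership in $GIC_{\omega_1}$ is invariant under transitive collapse, both $H_X$ and $H_{X'}$ lie in $GIC_{\omega_1}$. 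Theorem \ref{thm_VialeIso} then reduces the whole problem to verifying the single equality $H_X \cap H_{\omega_1} = H_{X'} \cap H_{\omega_1}$.

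The crux is the claim that $H_Y \cap H_{\omega_1} = p[Y \cap \omega_2]$ for every $Y \in \Gamma$; granting this, the required equality is immediate, since $p[X\cap\omega_2] = p[X'\cap\omega_2]$ by hypothesis. To prove the claim I would first record two standard features of any $Y \in GIC_{\omega_1}$: it has size $\omega_1$, and it satisfies $\omega_1 \subseteq Y$ and $Y \cap \omega_2 \in \omega_2$. From these, $\pi_Y$ fixes $\omega_1$ (so $\omega_1^{H_Y} = \omega_1$) and, being the identity on $\omega$, fixes every hereditarily countable element of $Y$ --- because any $a \in Y \cap H_{\omega_1}$ has, by elementarity, a surjection $\omega \to \text{trcl}(\{a\})$ lying in $Y$, which forces $\text{trcl}(\{a\}) \subseteq Y$. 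Since $p, p^{-1} \in Y$, this already gives $p[Y\cap\omega_2] = Y \cap H_{\omega_1} \subseteq H_Y \cap H_{\omega_1}$. For the reverse inclusion, $\omega_1^{H_Y} = \omega_1$ and transitivity of $H_Y$ yield $H_{\omega_1}^{H_Y} = H_Y \cap H_{\omega_1}$; by elementarity $\pi_Y(p)$ is a bijection of $\pi_Y(\omega_2) = Y\cap\omega_2$ onto $H_{\omega_1}^{H_Y}$, and for $\xi \in Y\cap\omega_2$ elementarity of $\pi_Y$ together with $\pi_Y(p(\xi)) = p(\xi)$ gives $\pi_Y(p)\big(\pi_Y(\xi)\big) = p(\xi)$. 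Hence the range of $\pi_Y(p)$, which is $H_Y \cap H_{\omega_1}$, equals $\{\, p(\xi) : \xi \in Y\cap\omega_2 \,\} = p[Y\cap\omega_2]$, as claimed.

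The part I expect to require the most care is the interaction with the transitive collapse: that $GIC_{\omega_1}$-membership is preserved on passing from $Y$ to $H_Y$ (so that Theorem \ref{thm_VialeIso} applies to $H_X$ and $H_{X'}$), that $\omega_1 \subseteq Y$ so that $\omega_1$ survives the collapse, and that $H_Y$ is correct about hereditary countability, in the sense that $H_{\omega_1}^{H_Y} = H_Y \cap H_{\omega_1}$. These points are routine, but they are the only places where the argument uses anything about $GIC_{\omega_1}$ beyond Theorem \ref{thm_VialeIso}; with them in place, the corollary reduces to the one-line observation that, through the fixed bijection $p$, the set $H_Y \cap H_{\omega_1}$ is determined by $Y \cap \omega_2$ alone.
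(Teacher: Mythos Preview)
Your proposal is correct and follows exactly the paper's approach: reduce to Theorem~\ref{thm_VialeIso} by verifying $H_X \cap H_{\omega_1} = p[X \cap \omega_2] = p[X' \cap \omega_2] = H_{X'} \cap H_{\omega_1}$. The paper records only this chain of equalities as ``immediate''; you have simply spelled out the routine collapse computations (that $\pi_Y$ fixes $H_{\omega_1} \cap Y$ pointwise and that $H_{\omega_1}^{H_Y} = H_Y \cap H_{\omega_1}$) which justify it.
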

\begin{proof}
This follows immediately from Theorem \ref{thm_VialeIso}, since 
\begin{equation*}
H_X \cap H_{\omega_1} = p[X \cap \omega_2] = p[X' \cap \omega_2] = H_{X'} \cap H_{\omega_1}
\end{equation*}
\end{proof}
Note also that the class $\Gamma$ from Corollary \ref{cor_VialeIso} is $\Pi_1(V)$ definable from the parameter $p \in H_{\omega_3}$.  This parameter $p$ itself---i.e. this wellorder of $H_{\omega_2}$ in ordertype $\omega_2$---is not assumed to be definable in any way for our proof below of Theorem \ref{thm_StrengthenViale}.  Viale's original construction of a $\lav{\omega_2}$ function made use of a \emph{definable} wellorder of $H_{\omega_2}$ that exists under PFA, as proved by Caicedo-Velickovic~\cite{MR2231126}.  We show that this coding mechanism turns out to be unnecessary, and we also provide a simplified, direct construction of the Laver function.

\begin{remark}\label{rem_ISP_Weaker}
The conjunction of $\mathfrak{c}= \omega_2$ and stationarity of $\text{GIC}_{\omega_1}$ is strictly weaker than PFA.  It follows from PFA by Todorcevic~\cite{MR980949} and Viale-Wei\ss~\cite{VW_ISP}; but it does not imply PFA  as shown by two different constructions:\footnote{Each construction also shows that the conjunction of  $\mathfrak{c} = \omega_2$ with stationarity of $GIC_{\omega_1}$ does \textbf{not} imply the existence of the Caicedo-Velickovic coding.}
\begin{enumerate}
 \item  the author has shown that the model obtained by forcing with Neeman's~\cite{NeemanPFA} pure side condition poset using models of 2 types below a supercompact cardinal produces a model of $ \mathfrak{c} = \omega_2$ plus stationarity of $\text{GIC}_{\omega_1}$ which is not a model of PFA.
 \item  Menachem Magidor has shown that adding a Cohen real over an arbitrary model of PFA preserves the stationarity of $\text{GIC}_{\omega_1}$; and by Shelah~\cite{MR768264} this forcing extension does not even model Martin's Axiom.  
\end{enumerate}
\end{remark}

\subsection{Proof of Theorem \ref{thm_StrengthenViale}}

Suppose $\Gamma$ is a stationary subclass of $\wp_\kappa(V)$ satisfying the assumptions of Theorem \ref{thm_StrengthenViale}; i.e. it projects downward, it satisfies the $\kappa$-Isomorphism Property, and it is $\Pi_1(V)$ definable from some parameter $p$, where $p \in H_{\kappa^+}$.  Given a function $F:\kappa \to H_\kappa$, let us say that a set $b$ is a  \emph{witness to non-$\Gamma$-Laverness of $F$} iff there is some algebra $\mathcal{A}_b = (H_{|\text{trcl}(b)|^+}, \in, \{ p,b \}, \dots)$ such that $\sigma_M^{-1}(b) \ne F(M \cap \kappa)$ for every $M \in \Gamma \cap \wp_{\kappa}(H_\theta)$ such that $M \prec \mathcal{A}_b$.  We say that a regular cardinal $\theta$ is the \emph{least cardinal witnessing non-$\Gamma$-Laverness of $F$} iff $\theta$ is the least regular cardinal such that there is a $b \in H_\theta$ witnessing the non-$\Gamma$-Laverness of $F$.

For any $\alpha < \kappa$ and any partial $g: \alpha \to H_\kappa$, let $\mathcal{W}^{\Gamma}_{g}$ be the set of transitive $ZF^-$ models $W$ such that:
\begin{itemize}
 \item there is an elementary $\sigma:W \to_{\Sigma_1} V$ with $\alpha = \text{crit}(\sigma)$ and $\sigma(\alpha) = \kappa$;
 \item $\text{range}(\sigma) \in \Gamma$ and $p \in \text{range}(\sigma)$;
 \item $W$ has a largest cardinal $\theta_W$
 \item $g \in W$ and $W$ believes that $\theta_W$ is the least regular cardinal witnessing non-$\Gamma^W$-Laverness of $g$, where $\Gamma^W$ is the subset of $W$ defined over $W$ using the definition of $\Gamma$ and the parameter $\sigma^{-1}(p)$.  
\end{itemize}

\begin{nonGlobalClaim}\label{clm_AtMostOne}
For any $\alpha < \kappa$ and any $g: \alpha \to H_\kappa$, the set $\mathcal{W}^\Gamma_g$ has \textbf{at most one} element.
\end{nonGlobalClaim}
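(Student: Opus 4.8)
The plan is to suppose $W,W'\in\mathcal{W}^{\Gamma}_g$ are distinct and derive a contradiction. First I would fix the associated $\Sigma_1$-elementary maps $\sigma\colon W\to V$ and $\sigma'\colon W'\to V$ witnessing membership in $\mathcal{W}^{\Gamma}_g$, and set $X:=\text{range}(\sigma)$, $X':=\text{range}(\sigma')$. Since $W,W'$ are transitive, $\sigma=\sigma_X$ and $\sigma'=\sigma_{X'}$, so $H_X=W$ and $H_{X'}=W'$; moreover $X,X'\in\Gamma$, and $X\cap\kappa=X'\cap\kappa=\alpha$ because $\text{crit}(\sigma)=\text{crit}(\sigma')=\alpha$ and $\sigma(\alpha)=\sigma'(\alpha)=\kappa$. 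Now the $\kappa$-Isomorphism Property applies to the pair $X,X'$, so one of $W$, $W'$ is a hereditary initial segment of the other; assume without loss of generality that $W$ is a hereditary initial segment of $W'$. If $W\neq W'$ then $W=(H_\lambda)^{W'}$ for some $\lambda\in\text{CARD}^{W'}$ with $\lambda<\text{ORD}^{W'}$, and this is the configuration I will refute.

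Next I would record the cardinal bookkeeping. Since $\theta_W$ is the largest cardinal of $W=(H_\lambda)^{W'}$ and $(H_\lambda)^{W'}$ computes cardinals correctly below $\lambda$, we get $\lambda=(\theta_W^{+})^{W'}$; and since $\lambda$ is a cardinal of $W'$ while $\theta_{W'}$ is the largest cardinal of $W'$, it follows that $\theta_W<\lambda\le\theta_{W'}$. Also $\theta_W$ is regular from the point of view of $W'$: any cofinal map into $\theta_W$ of length $<\theta_W$ is a set of hereditary size $<\lambda$, hence lies in $(H_\lambda)^{W'}=W$, so $\text{cf}^{W'}(\theta_W)=\text{cf}^{W}(\theta_W)$ and $W\models$ ``$\theta_W$ is regular.'' Finally, for any $b$ with $|\text{trcl}(b)|^{W}<\theta_W$, the underlying set of $\mathcal{A}_b$ is $H_{\mu^{+}}$ for a cardinal $\mu<\theta_W<\lambda$, so $\mathcal{A}_b$ and all of its elementary submodels of size $<\alpha$ are computed identically in $W$ and in $W'$.

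The core of the argument is a transport of a non-Laverness witness. Let $b\in(H_{\theta_W})^{W}$ together with an algebra $\mathcal{A}_b\in W$ witness, inside $W$, the non-$\Gamma^{W}$-Laverness of $g$ at the level $\theta_W$. I claim the same $b$ and $\mathcal{A}_b$ witness non-$\Gamma^{W'}$-Laverness of $g$ inside $W'$. Granting this, $W'$ believes $\theta_W$ is a regular cardinal admitting a witness in $H_{\theta_W}$; since $\theta_W<\theta_{W'}$, this contradicts $W'$'s belief that $\theta_{W'}$ is the \emph{least} regular cardinal witnessing non-$\Gamma^{W'}$-Laverness of $g$, and we are done. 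To prove the claim, take any $N\in\Gamma^{W'}$ with $N\prec\mathcal{A}_b$; then $|N|<\alpha<\lambda$ and $N$ is a subset of a set lying in $(H_\lambda)^{W'}$, so $N\in(H_\lambda)^{W'}=W$, and it suffices to know that $\Gamma^{W'}\cap W\subseteq\Gamma^{W}$. Granting that, $N\in\Gamma^{W}$ and $N\prec\mathcal{A}_b$, so the $W$-side hypothesis gives $\sigma_N^{-1}(b)\neq g(N\cap\alpha)$, and $\sigma_N^{-1}(b)$ and $N\cap\alpha$ are absolute between $W$ and $W'$. And $\Gamma^{W'}\cap W\subseteq\Gamma^{W}$ would follow by downward absoluteness of $\Pi_1$ formulas from the transitive model $W'$ to its transitive submodel $W$, provided the parameter used to define $\Gamma^{W'}$ agrees, on $W$, with the one used to define $\Gamma^{W}$.

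The hard part will be exactly this proviso: that $\sigma^{-1}(p)=(\sigma')^{-1}(p)$. This is where the hypothesis $p\in H_{\kappa^{+}}$ is essential. By $\Sigma_1$-elementarity, ``$|\text{trcl}(\{p\})|\le\kappa$'' reflects down, so $\sigma^{-1}(p)$ and $(\sigma')^{-1}(p)$ each have hereditary cardinality $\le\alpha$ in their respective models and therefore both lie in $(H_{\alpha^{+}})^{W}=(H_{\alpha^{+}})^{W'}$. It then remains to show that the transitive collapses of $X$ and of $X'$ send $p$ to the same object. I would approach this by fixing, inside $X$ (respectively $X'$), a surjection of $\kappa$ onto $\text{trcl}(\{p\})$, noting that its restriction to ordinals below $\alpha$ enumerates precisely $\text{trcl}(\{p\})\cap X$ (respectively $\text{trcl}(\{p\})\cap X'$), so that $\sigma^{-1}(p)$ is identified with the Mostowski collapse, at a canonically chosen point, of a wellfounded relation on $\alpha$ determined by that trace; the nestedness $W=(H_\lambda)^{W'}$ (together with $p\in\text{range}(\sigma)\cap\text{range}(\sigma')$) is then what I would use to match up the two traces and hence the two collapses of $p$. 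Apart from this parameter bookkeeping, the proof is just the Isomorphism Property together with the minimality of $\theta_W$, $\theta_{W'}$ built into the definition of $\mathcal{W}^{\Gamma}_g$.
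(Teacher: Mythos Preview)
Your argument is correct and essentially identical to the paper's: both apply the Isomorphism Property to nest $W$ as a proper hereditary initial segment of $W'$, verify $\sigma^{-1}(p)=(\sigma')^{-1}(p)$ via an absolute coding of $p\in H_{\kappa^+}$ as a subset of $\kappa$, use $\Pi_1$-definability of $\Gamma$ to get $\Gamma^{W'}\cap W\subseteq\Gamma^W$, and then push the non-Laverness witness for $g$ from $W$ up to $W'$ to contradict the minimality of $\theta_{W'}$. The only cosmetic differences are that the paper packages the final transfer as upward $\Sigma_1$-absoluteness of the single sentence ``$(\forall M\in X)(M\prec\mathcal{A}_b\to\sigma_M^{-1}(b)\ne g(M\cap\alpha))$'' in the set parameters $X,\mathcal{A}_b,g$, whereas you argue pointwise over $N\in\Gamma^{W'}$; and for the parameter-matching step you in fact only need $X\cap\kappa=X'\cap\kappa=\alpha$, not the nestedness $W\subseteq W'$.
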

\begin{proof}[Proof of Claim \ref{clm_AtMostOne}]
Suppose $W$, $W'$ were two distinct elements of $\mathcal{W}^{\Gamma}_g$; let $\sigma:W \to_{\Sigma_1} V$ and $\sigma': W' \to_{\Sigma_1} V$ be the maps required by the definition of $\mathcal{W}^\Gamma_g$, and set $M:= \text{range}(\sigma)$ and $M':= \text{range}(\sigma')$.  Note that $p \in M \cap M' \cap H_{\kappa^+}$, and it follows easily (by coding $p$ as a subset of $\kappa$ in an absolute manner and using the assumption that $M \cap \kappa = M' \cap \kappa = \alpha$) that $p_W:= \sigma^{-1}(p)$ is equal to $p_{W'}:= \sigma^{' -1}(p)$.  By the assumption that $\Gamma$ has the $\kappa$-Isomorphism Property, one of $W$, $W'$ is a hereditary initial segment of the other; WLOG assume $W$ is a strict hereditary initial segment of $W'$.  Note then that $\theta_{W}$ (the largest cardinal of $W$) is strictly smaller than $\theta_{W'}$ (the largest cardinal of $W'$).  Note also, since $p_W = p_{W'}$, $\Gamma$ is $\Pi_1$ definable in $p$, and $W$ is a hereditary initial segment of (in particular a $\Sigma_1$ elementary substructure of) $W'$, then 
\begin{equation*}
\Gamma^W = \Gamma^{W'} \cap W
\end{equation*}
Let $\bar{H}:= (H_{\theta_W})^{W} = (H_{\theta_W})^{W'}$.  Then
\begin{equation*}
X:= (\wp_\alpha(\bar{H}))^W \cap \Gamma^W 
\end{equation*}  
Let $b \in \bar{H}$ witness (from the point of view of $W$) that $g$ is not a $\lav{\Gamma^W}$ function.  Then there is some algebra $\mathcal{A}_b \in W$ on $\bar{H}$ such that:
\begin{equation*}
W \models (\forall M \in X) \Big(  M \prec \mathcal{A}_b \implies \sigma_M^{-1}(b) \ne g(M \cap \alpha)  \Big)
\end{equation*}
This is a $\Sigma_1$ statement in the parameters $X$, $\mathcal{A}_b$, and $g$, and is thus upward absolute to $W'$; but since $X$ is also equal to $(\wp_\alpha(\bar{H}))^{W'} \cap \Gamma^{W'}$, this contradicts the minimality of $\theta_{W'}$.
\end{proof}

Now for any $\alpha < \kappa$ and any $g: \alpha \to H_\kappa$, let $W_g$ denote the unique element of $W^{\Gamma}_g$ given by Claim \ref{clm_AtMostOne} (if it exists) and let  $\theta_g$ the largest cardinal of $W_g$.  Let $b_g$ be \textbf{any} witness in  $(H_{\theta_g})^{W_g}$ to the non-Laverness of $g$ w.r.t. $\Gamma^{W_g}$.  

\begin{nonGlobalClaim}\label{clm_ExplicitConstructionLaver}
The function $F:\kappa \to H_\kappa$ defined recursively by $\alpha \mapsto b_{g \restriction \alpha}$ (if this exists; 0 otherwise) is a $\lav{\Gamma}$ function.  
\end{nonGlobalClaim}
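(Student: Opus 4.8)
The plan is to argue by contradiction, using the self-referential character of the recursion: if $F$ were not a $\lav{\Gamma}$ function, then a minimal counterexample to Laverness, once reflected into a suitable $M\in\Gamma$, would be forced to coincide with the value $F(M\cap\kappa)$ output by the recursion — yet a counterexample is, by construction, precisely a set that is \emph{not} guessed at any such $M$. Concretely, assume $F$ is not $\lav{\Gamma}$, and let $\theta_0$ be the least regular cardinal for which there is a witness $b_0$ to the non-$\Gamma$-Laverness of $F$, via an algebra $\mathcal{A}_0=(H_{\theta_0},\in,\{p,b_0\},F,\dots)$; since $F\in H_{\theta_0}$ this forces $\theta_0\ge\kappa^+$. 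Fix a large regular $\chi$ — large enough that $H_\chi$ correctly computes the facts about $F,p,\theta_0,b_0,\Gamma$ we use (legitimate since $\Gamma$ is $\Pi_1(V)$ in $p$ and $H_\chi\prec_{\Sigma_1}V$, so $H_\chi$ is correct about membership in $\Gamma$ and about witnessing). Using stationarity of $\Gamma$, choose $M\in\Gamma$ with $M\prec(H_\chi,\in,\theta_0,b_0,\mathcal{A}_0,F,p,\dots)$ and, in addition, with $\mu\in M$ where $\kappa=\mu^+$. Put $\alpha:=M\cap\kappa$. As $\Gamma$ projects downward, $M':=M\cap H_{\theta_0}\in\Gamma$; since $\kappa<\theta_0$ we have $M'\cap\kappa=\alpha$, and $M'\prec\mathcal{A}_0$. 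Let $\pi_M$ be the transitive collapse of $M$, $\sigma_M:=\pi_M^{-1}$, $\bar\theta_0:=\sigma_M^{-1}(\theta_0)$, and set $W:=\sigma_M^{-1}(H_{\theta_0^+})$. Then $W$ is a transitive $ZF^-$ model whose largest cardinal is $\bar\theta_0$, with $(H_{\bar\theta_0})^W=\sigma_M^{-1}(H_{\theta_0})=H_{M'}$, and $\sigma:=\sigma_M\restriction W\colon W\to_{\Sigma_1}V$ (via $W\prec_{\Sigma_1}H_M$, full elementarity of $\sigma_M$ into $H_\chi$, and $H_\chi\prec_{\Sigma_1}V$) satisfies $\mathrm{crit}(\sigma)=\alpha$, $\sigma(\alpha)=\kappa$, $\mathrm{range}(\sigma)=M\cap H_{\theta_0^+}\in\Gamma$ (downward projection again), and $p\in\mathrm{range}(\sigma)$.

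The crux is the identity $\sigma_M^{-1}(F)=F\restriction\alpha$. For $\xi<\alpha$, $\sigma_M^{-1}(F)(\xi)=\pi_M(F(\xi))$, and $F(\xi)\in M$; moreover $F(\xi)$ lies inside $W_{F\restriction\xi}$, whose image under its $\Sigma_1$-embedding lies in $\Gamma\subseteq\wp_\kappa(V)$, so $|\mathrm{trcl}(F(\xi))|\le\mu$. Since $\mu\in M$ forces $\mu<\alpha$ and hence $\mu\subseteq M$, elementarity of $M$ supplies a surjection $\mu\to\mathrm{trcl}(F(\xi))$ lying in $M$, whence $\mathrm{trcl}(F(\xi))\subseteq M$ and $\pi_M$ fixes $F(\xi)$; thus $\sigma_M^{-1}(F)=F\restriction\alpha$ and in particular $F\restriction\alpha\in W$. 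I would then check $W\in\mathcal{W}^\Gamma_{F\restriction\alpha}$: all clauses have been verified except that $W\models$ ``$\bar\theta_0$ is the least regular cardinal witnessing non-$\Gamma^W$-Laverness of $F\restriction\alpha$''. That $\bar\theta_0$ is itself a witnessing level holds because $\pi_M(b_0)\in(H_{\bar\theta_0})^W$ together with $\pi_M(\mathcal{A}_0)$ witnesses it — pull the relevant statement down the full elementary collapse $\pi_M$ — and no regular cardinal of $W$ below $\bar\theta_0$ works, by the minimality of $\theta_0$, since ``witnessing at a level $<\bar\theta_0$'' is absolute between $W$, $H_M$, and (transported along $\sigma_M$) $H_\chi$. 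Hence, by Claim~\ref{clm_AtMostOne}, $W_{F\restriction\alpha}=W$ and $\theta_{F\restriction\alpha}=\bar\theta_0$.

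It remains to close the loop. By definition of the recursion, $F(\alpha)=b_{F\restriction\alpha}\in(H_{\bar\theta_0})^W=H_{M'}$, and there is an algebra $\bar{\mathcal{A}}\in W$ on $(H_{\bar\theta_0})^W$ demonstrating, inside $W$, that $F\restriction\alpha$ fails the Laver guessing for $F(\alpha)$. Pushing $F(\alpha)$ and $\bar{\mathcal{A}}$ forward along $\sigma=\sigma_M\restriction W$ — this is legitimate because, by Collection, the relevant ``$\ldots$ witnesses $\ldots$'' statement is equivalent over $ZF^-$ models to a $\Sigma_1$ statement — yields $b^\ast:=\sigma_M(F(\alpha))\in H_{\theta_0}$ and an algebra $\mathcal{A}^\ast:=\sigma_M(\bar{\mathcal{A}})$ on $H_{\theta_0}$ such that $\sigma_N^{-1}(b^\ast)\ne F(N\cap\kappa)$ for every $N\in\Gamma\cap\wp_\kappa(H_{\theta_0})$ with $N\prec\mathcal{A}^\ast$. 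Now $\mathcal{A}^\ast\in M$ and all its constants lie in $M'$, so $M'\prec\mathcal{A}^\ast$; since also $M'\in\Gamma$ and $M'\cap\kappa=\alpha$, this forces $\sigma_{M'}^{-1}(b^\ast)\ne F(\alpha)$. But $F(\alpha)\in H_{M'}$ and $\sigma_{M'}=\sigma_M\restriction H_{M'}$, so $b^\ast=\sigma_{M'}(F(\alpha))$, hence $\sigma_{M'}^{-1}(b^\ast)=F(\alpha)$ — a contradiction.

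I expect the main obstacle to be the identity $\sigma_M^{-1}(F)=F\restriction\alpha$: one must know that the recursively defined $F$ genuinely reflects into $\Gamma$, and making this precise requires both the smallness/closure observation above (each $F(\xi)$, $\xi<\alpha$, has transitive closure of size $\le\mu\subseteq M$) and the fact that the recursion is absolute enough at each step — which is exactly what the clause ``$\sigma\colon W\to_{\Sigma_1}V$'' in the definition of $\mathcal{W}^\Gamma_g$, together with the $\Pi_1$-definability of $\Gamma$, is engineered to provide. A secondary point requiring care is the bookkeeping of the ``$H_{|\mathrm{trcl}(b)|^+}$'' universes attached to witnesses, which is why the final contradiction is routed through the elementary map $\sigma_{M'}$ rather than invoking the minimality of $\theta_0$ a second time.
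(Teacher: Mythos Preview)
Your argument is correct and follows essentially the same contradiction strategy as the paper: reflect the minimal counterexample level into some $M\in\Gamma$, identify the relevant collapse as the unique element of $\mathcal{W}^\Gamma_{F\restriction\alpha}$, and observe that the downward-projected $M\cap H_{\theta_0}$ then guesses the value the recursion outputs at $\alpha$. The paper streamlines the bookkeeping in two ways: it picks $M'\in\Gamma$ directly at level $\Omega=\theta^+$ (so $H_{M'}$ \emph{is} $W_{F\restriction\alpha}$, and the push-forward uses the full elementarity of $\sigma_{M'}$ into $H_\Omega$ rather than your $\Sigma_1$-absoluteness detour through a larger $H_\chi$), and it does not assume $\kappa=\mu^+$---your argument for $\mathrm{trcl}(F(\xi))\subseteq M$ already works for arbitrary regular $\kappa$, since by elementarity $M$ itself supplies some $\gamma\in M\cap\kappa=\alpha$ together with a surjection $\gamma\to\mathrm{trcl}(F(\xi))$ in $M$. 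Conversely, your explicit verification of $\sigma_M^{-1}(F)=F\restriction\alpha$ fills in a step the paper simply asserts.
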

\begin{proof}[Proof of Claim \ref{clm_ExplicitConstructionLaver}]
This is where we use assumption that $\Gamma$ projects downward, along with the assumption that $\Gamma$ is stationary at every $\wp_\kappa(H_\Omega)$.  Suppose for a contradiction that $F$ is not a $\lav{\Gamma}$ function; let $\theta$ be the least regular cardinal witnessing non-$\Gamma$-Laverness of $F$.  Let $\Omega:= \theta^+$ and, using stationarity of $\Gamma$, pick an $M' \prec (H_{\Omega}, \in, \{ F, p \})$ such that $M' \in \Gamma$.  Let $\alpha:= M' \cap \kappa$,  $\sigma_{M'}:H_{M'} \to M'$ be the inverse of the Mostowski collapse of $M'$, and $\bar{\Gamma} := \sigma_{M'}^{-1}(\Gamma \cap \wp_\kappa(H_\theta))$.  Then $\sigma_{M'}^{-1}(F) = F \restriction \alpha$ and $H_{M'}$ is the unique element of $\mathcal{W}^{\Gamma}_{F \restriction \alpha}$; so by the recursive definition of the function $F$, we know that $F(\alpha)$ is some element of $\bar{H}$ witnessing the non-$\bar{\Gamma}$-Laverness of $F \restriction \alpha$ from the point of view of $H_{M'}$.  Set $\bar{b}:= F(\alpha) \in H_{M'}$ and let $\bar{\mathcal{A}}=(\bar{H}, \in, \{ \bar{b}, \bar{p} \}, \dots  ) \in H_{M'}$ be an algebra corresponding to the witness $\bar{b}$.  Let $b:= \sigma_{M'}(\bar{b})$ and $\mathcal{A}:= \sigma_{M'}(\bar{\mathcal{A}})$; by elementarity of $\sigma_{M'}$:
\begin{equation}\label{eq_WhatH_OmegaBelieves}
H_{\Omega} \models (\forall M)\Big( M \prec \mathcal{A} \ \wedge \ M \in \Gamma \implies \sigma_M^{-1}(b) \ne F(M \cap \kappa) \Big)
\end{equation}    
Set $M:= M' \cap H_\theta$; by the downward projection assumption on the class $\Gamma$, we know that
\begin{equation*}
M \in \Gamma
\end{equation*}
Furthermore, since $M' \prec (H_\Omega, \in, \{ \mathcal{A} \})$ then
\begin{equation*}
M = M' \cap H_\theta \prec \mathcal{A}
\end{equation*}
Finally:
\begin{equation*}
\sigma_M^{-1}(b) = \sigma_{M'}^{-1}(b) = \bar{b} = F(\alpha) = F(M' \cap \kappa) = F(M \cap \kappa)
\end{equation*}
These properties of $M$ contradict (\ref{eq_WhatH_OmegaBelieves}), and complete the proof of the claim.
\end{proof}
This completes the proof of Theorem \ref{thm_StrengthenViale}.

\section{Concluding remarks}\label{sec_Conclusion}

Recall that Corollary \ref{cor_Failure_wCC_implies_Laver} said that if $\Gamma \subseteq \wp_\kappa(V)$ and the appropriate version of Chang's Conjecture fails for $\Gamma$, then $\lav{\Gamma}$ holds; and Remark \ref{rem_ConverseFails} demonstrated that the converse was not literally true.  However, it's still natural to wonder if $\lav{\Gamma}$ must always be \emph{essentially} due to some failure of Chang's Conjecture:
%\textcolor{red}{I suspect the answer is ``no", and that your strengthening of Viale's result might be essential here.  THe point, I believe, in obtaining a ``no" answer---at least in the case $\kappa = \omega_2$---is to use a $\mathbb{P}$ which has master conditions for large models; such as Neeman's poset.}
\begin{question}\label{q_Failure_wCC_responsible}
Suppose $\kappa$ is a successor cardinal, $\Gamma \subset \wp_\kappa(V)$, and $F:\kappa \to H_\kappa$ is a $\lav{\Gamma}$ function.  Must there be some (definable) $\Gamma' \subseteq \Gamma$ such that:
\begin{enumerate}
 \item $F$ is still a $\lav{\Gamma'}$ function; and
 \item $\text{wCC}(\Gamma')$ fails?
\end{enumerate}
\end{question}

\begin{bibdiv}
\begin{biblist}
\bibselect{Bibliography}
\end{biblist}
\end{bibdiv}

\end{document}